\numberwithin{equation}{section}
\title{\bf A Stochastic Linear-Quadratic Leader-Follower Differential Game with Elephant Memory
	\thanks{This work is financially supported by the National Key R\&D Program of China (2022YFA1006104), National Natural Science Foundations of China (12471419, 12271304), and Shandong Provincial Natural Science Foundations (ZR2024ZD35, ZR2022JQ01).}}
\author{\normalsize
	Xinpo Li\thanks{\it Research Center for Mathematics and Interdisciplinary Sciences, Shandong University, Qingdao 266237, P.R. China; and School of Mathematics, Shandong University, Jinan 250100, P.R. China, E-mail: lixinpo@mail.sdu.edu.cn},
   \ Jingtao Shi\thanks{\it Corresponding author, School of Mathematics, Shandong University, Jinan 250100, P.R. China, E-mail: shijingtao@sdu.edu.cn}}
\newtheorem{mythm}{Theorem}[section]
\newtheorem{mydef}{Definition}[section]
\newtheorem{mylem}{Lemma}[section]
\newtheorem{Remark}{Remark}[section]
\begin{document}
	
\maketitle

\noindent{\bf Abstract:}\quad
	This paper is concerned with a stochastic linear-quadratic leader-follower differential game with elephant memory. The model is general in that the state equation for both the leader and the follower includes the elephant memory of the state and the control, which are part of the diffusion term. Under certain assumptions, the state feedback representation of the open-loop Stackelberg strategy is derived by introducing two Riccati equations and a special matrix-valued equation. Finally, theoretical results are illustrated by means of an example concerning a dynamic advertising problem with elephant memory.
	
	\vspace{2mm}
	
\noindent{\bf Keywords:}\quad Leader-follower differential game, elephant memory, linear-quadratic control, anticipated backward stochastic differential equation, Riccati equation, Stackelberg strategy
	
	\vspace{2mm}
	
\noindent{\bf Mathematics Subject Classification:}\quad 93E20, 60H10, 49N10, 91A15, 91A65, 34K35, 34K50
	
\section{Introduction}

	The Stackelberg game is a hierarchical, non-cooperative game (Ba\c{s}ar and Olsder \cite{BO95}), which can be traced back to the pioneering work \cite{Stack34} by von Stackelberg in 1934, and whose economic rationale is based on market phenomena in which some firms dominate others. Simultaneously, \cite{Stack34} is notable as the first to propose a hierarchical solution for the leader and the follower, now commonly renowned as the Stackelberg equilibrium. The leader and the follower, two players with asymmetric roles, constitute the Stackelberg game problem. To obtain a Stackelberg equilibrium, it is essential to follow a sequential approach consisting of two distinct stages. At the start, the follower adopts their optimal strategy to minimize his/her cost functional based on the choice announced by the leader. In the second phase, the leader, on the foundation of acquiring the optimal strategy for the follower, adapts the optimal strategy for himself/herself, such that the corresponding cost functional attains the minimal value.
	
	The Stackelberg game has widely recognized on account of its effectiveness in portraying real-world issues. Simann and Cruz \cite{SC73} conducted an early study on the properties of the Stackelberg solution in both static and dynamic non-zero-sum two-player games. The open-loop strategies for deterministic systems, inscribed by the introduced Riccati equations, have been further investigated in Pan and Yong \cite{PY91}, Ba\c{s}ar and Olsder \cite{BO95}, Freiling et al. \cite{FJL01}. Accordingly, stochastic systems have attracted extensive attention, a few of which are mentioned here. Castanon and Athans \cite{CM76} has previously studied this issue. Bagchi and Ba\c{s}ar \cite{BB81} analyzed the linear-quadratic Stackelberg differential game problem, focusing on the diffusion term of the state equation, which excludes state variables and control variables. Yong \cite{Y02} studied a linear-quadratic Stackelberg differential game problem with stochastic coefficients, where the control variables enter into the diffusion term of state equation and the condition that the coefficient matrix be positive definite has been eliminated. The corresponding open-loop optimal strategy is provided. Bensoussan et al. \cite{BCS15} investigated a linear-quadratic Stackelberg differential game problem featuring diffusion term without control, and proposed several solution concepts related to the information sets of players. Furthermore, the maximum principle under the adapted closed-loop memoryless information structure was established. Shi et al. \cite{SWX16, SWX17} and Zheng and Shi \cite{ZS22-2} studied Stackelberg differential games with asymmetric information.
	
	The dynamical systems discussed in the aforementioned literature rely solely on current values. However, in reality, past history can also influence certain stochastic systems. So far there have been extensive literature studying stochastic optimal control problems with time delay, see Mohammed \cite{Mohammed84, Mohammed98}, \O ksendal and Sulem \cite{OS00}, Chen and Wu \cite{CW10}, Meng and Shi \cite{MS21}, Meng et al. \cite{MSWZ23} and the references therein. Hence it is of great importance to consider time delay in the dynamics of Stackelberg differential games. There exist a few literatures in this topic and let us mention some of them. Harband \cite{H77} pioneered this direction, successfully demonstrating the existence of a monotonic solution to the nonlinear car-following equation. In a linear-quadratic differential game with time delay, the evolution of the game is characterized by coupled differential equations that involve both lumped and distributed parameter subsystems. Ishida and Shimemura \cite{IS83, IS87} established sufficient conditions for formulating a team-optimal closed-loop Stackelberg strategy and outlined the necessary and sufficient conditions for open-loop Stackelberg strategies, respectively. \O ksendal et al. \cite{OSU14} developed a continuous-time vertical contracting model to account for the fact that information is delayed in some market economies, furthermore explicitly formulated for equilibrium prices. Bensoussan et al. \cite{BCY15} investigated a linear-quadratic mean-field game between a leader and a group of followers under the Stackelberg game setting, where only the leader's state delay appears in the model. In a subsequent work, Bensoussan et al. \cite{BCLY17} further explored the issue of managing control delays. Xu and Zhang \cite{XZ16} focused on both difference and differential leader-follower games that involve time delays, and derived the necessary and sufficient conditions for solvability with the help of the new co-states introduced, which ensures the existence of a unique open-loop Stackelberg strategy for this problem. Additionally, they explicitly obtained the open-loop strategy in terms of decoupled and symmetric Riccati equations. Xu et al. \cite{XSZ18} studied a linear-quadratic leader-follower stochastic differential game, where the time delay only affects the leader's problem, and obtained an open-loop solution expressed in terms of the conditional expectation relating to several symmetric Riccati equations. Meng and Shi \cite{MS22} investigated a linear-quadratic stochastic Stackelberg differential game with time delay, where both state delay and control delay are present in the state equation, and they both contribute to the diffusion term. Li and Wang \cite{LW24} examined a linear-quadratic generalized Stackelberg game involving time-delay, motivated by the multi-scheme supply chain problem, which incorporates a multi-level hierarchical structure with delays.
	
	The aforementioned literature focus on stochastic control systems with point delay or average delay, where the delay time is typically treated as a fixed constant. In addition, Dahl et al. \cite{DMOR16} further investigated stochastic delayed systems with {\it noisy memory}, where the dynamics of the state depend on the past history of It\^o's stochastic integral. Subsequently, Agram and \O ksendal \cite{AO19-3} explored stochastic systems incorporating memory mean-field processes. However, the current state of certain systems, such as long-distance signaling and duopoly games in economic markets, is influenced not just by past values of the process over a specific interval, but by the entire history of the process, which is called {\it elephant memory} that is termed inspired by an American folklore that an elephant never forgets. Agram and \O ksendal \cite{AO19-2} and Feng et al. \cite{FGWX24} have investigated the optimal control problem for stochastic control systems with elephant memory consecutively. In view of the broader nature of elephant memory and the effectiveness of Stackelberg differential game in illustrating economic phenomena, we propose a comprehensive model that encompasses elephant memory and provides a state feedback representation for an open-loop solution to the stochastic linear-quadratic leader-follower differential game in this paper. To the best of our knowledge, this topic has not been studied in the literature. The contributions of this paper can be summarized as below.

(1) Our model is general. The state equation of both the leader and follower contains the elephant memory of the state and the control, and these memories are part of the diffusion term.
		
(2) To address the challenges posed by the elephant memory, we draw inspiration from \cite{AO19-2, FGWX24} and apply the theory of Fr\'echet derivative with its dual operators, which leads us to introduce the adjoint equation (\ref{m1-equation}) which is an {\it anticipated backward stochastic differential equation} (ABSDE for short) and the partially coupled {\it forward-backward stochastic differential equation} (FBSDE for short) (\ref{p2-equation}). The latter composed of a {\it stochastic differential equation} (SDE for short) of elephant memory alongside an ABSDE, thus, we derive Theorem \ref{thm3.1} and Theorem \ref{thm4.1}, respectively.

(3) Our conclusions are straightforward. The open-loop Stackelberg strategies are derived using two Riccati equations ((\ref{P1-equation}), (\ref{P2-equation})), along with a matrix-valued equation (\ref{L-equation}), where they are easier to solve compared to the Riccati equations (7) and (15) presented in \cite{MS22}.
		
(4) As an application, we explore a dynamic cooperative advertising problem with elephant memory in the marketing channel. Inspired by the realization that businessmen could determine the timing of advertisements based on the relationship between prior advertisements and sales of products in a real market economy.
	
	The paper is structured as follows. Section 2 formulates the stochastic Stackelberg differential game with elephant memory and introduces essential definition along with lemmas. The optimization problems for the follower and the leader are addressed in Sections 3 and 4, respectively. Section 5 is concerned with an examination of the practical application to the dynamic advertising problem. Finally, there are some conclusive remarks in Section 6.
	
\section{Problem statement and preliminaries}

	Let us first briefly introduce some concepts that will be utilized in this paper.
	
	Throughout this work, $\mathbb{R}^{n\times m}$ represents the Euclidean space of all $n\times m$ real matrices, while $\mathbb{S}^n$ denotes the space containing all $n\times n$ symmetric matrices. $\mathbb{R}^{n\times m}$ is abbreviated as $\mathbb{R}^n$ when $m=1$. The symbols $|\cdot|$ and $\langle\cdot,\cdot\rangle$ indicate the norm in $\mathbb{R}^n$ and the inner product, respectively. The superscript $^\top$ frequently appears in the rest of content, denoting the transpose of a vector or matrix. $I$ represents the identity matrix of appropriate dimensions.
	
	Let $T>0$ be a fixed finite real number, suppose $(\Omega,\mathcal{F},\{\mathcal{F}_t\}_{t\geq 0},\mathbb{P})$ is a complete filtered probability space with the filtration defined as $\{\mathcal{F}_t\}_{t\geq 0}:=\sigma\{W(s);0 \leq s\leq t\}$, where $\{W(t)\}_{t\geq 0}$ is a one-dimensional standard Brownian motion. Under the probability measure $\mathbb{P}$, $\mathbb{E}[\cdot]$ designates the expectation, consequently, the conditional expectation is symbolized as $\mathbb{E}^{\mathcal{F}_t}[\cdot]:=\mathbb{E}[\cdot|\mathcal{F}_t]$.
		
	Then, we present the following spaces. For an integer $p>0$, we define
	\begin{eqnarray*}\begin{aligned}
			&L^p([0,T];\mathbb{R}^{n\times n}):=\bigg\{\mathbb{R}^{n\times n}\mbox{-valued function }\phi(t);\ \int_0^T|\phi(t)|^pdt<\infty\bigg\},\\
			&L^{\infty}([0,T];\mathbb{R}^{n\times n}):=\bigg\{\mathbb{R}^{n\times n}\mbox{-valued function }\phi(t);\ \sup\limits_{0\leq t\leq T}|\phi(t)|dt<\infty\bigg\},\\
			&C([0,T];\mathbb{R}^n):=\bigg\{\mathbb{R}^n\mbox{-valued continuous function }\phi(t);\ \sup\limits_{0\leq t\leq T}|\phi(t)|<\infty\bigg\},\\
			&L^2_\mathcal{F}([0,T];\mathbb{R}^n):=\bigg\{\mathbb{R}^n\mbox{-valued }\mathcal{F}_t\mbox{-adapted process }\phi(t);\ \mathbb{E}\int_0^T|\phi(t)|^2dt<\infty\bigg\},\\
			&L^2_\mathcal{F}(\Omega;C([0,T];\mathbb{R}^n)):=\bigg\{\mathbb{R}^n\mbox{-valued }\mathcal{F}_t\mbox{-adapted process }\phi(t);\
			\mathbb{E}\Big[\sup\limits_{0\leq t\leq T}|\phi(t)|^2\Big]<\infty\bigg\},\\
		    &V^2_\mathcal{F}([0,t];\mathbb{R}^n):=\bigg\{\big\{\phi(s)\big\}_{0\le s\le t}\mid \mathbb{R}^n\mbox{-valued } \mathcal{F}_s\mbox{-adapted process }\phi(s),\ \mathbb{E}\int_0^t|\phi(s)|^2ds<\infty\bigg\},\\
	\end{aligned}\end{eqnarray*}
    \begin{eqnarray*}\begin{aligned}		
            &S^2_\mathcal{F}(\Omega;C([0,t];\mathbb{R}^n)):=\bigg\{\big\{\phi(s)\big\}_{0\le s\le t} \mid \mathbb{R}^n\mbox{-valued } \mathcal{F}_s\mbox{-adapted process }\phi(s),\\
            &\hspace{4cm}\mathbb{E}\Big[\sup\limits_{0\leq s\leq t}|\phi(s)|^2\Big]<\infty\bigg\}.	
	\end{aligned}\end{eqnarray*}

    Next, we elaborate on the issue we intend to examine in this work.

    The linear controlled system, which serves as the foundation for the Stackelberg stochastic differential game problem, is described by the following SDE that incorporates elephant memory:
	\begin{equation}\label{system equation}\left\{\begin{aligned}
			dx^{u^1,u^2}(t)=&\ \Big[ a^1(t)x^{u^1,u^2}(t)+a^2(t)x^{u^1,u^2}_t+b^1(t)u^1_t+b^2(t)u^2_t \Big]dt\\
			               &+\Big[ c^1(t)x^{u^1,u^2}(t)+c^2(t)x^{u^1,u^2}_t+d^1(t)u^1_t+d^2(t)u^2_t \Big]dW(t),\ t\in[0,T],\\
			 x^{u^1,u^2}(0)=&\ x_0,
	\end{aligned}\right.\end{equation}
	and the cost functionals for both the follower and the leader are defined as follows, respectively:
	\begin{equation}\begin{aligned}\label{cost}
			J^i(u^1(\cdot),u^2(\cdot)):=&\ \mathbb{E}\bigg\{\int_0^T \bigg[\Big\langle l^i(t) x^{u^1,u^2}(t), x^{u^1,u^2}(t)\Big\rangle+\Big\langle \bar{l}^i(t) x^{u^1,u^2}_t, x^{u^1,u^2}_t\Big\rangle\\
			&\qquad +\Big\langle r^i(t)u^i_t,u^i_t \Big\rangle\bigg]dt+\Big\langle g^ix^{u^1,u^2}(T),x^{u^1,u^2}(T) \Big\rangle\bigg\},\ i=1,2.
	\end{aligned}\end{equation}
	In (\ref{system equation}) and (\ref{cost}), $x^{u^1,u^2}(\cdot)\in\mathbb{R}^n$ denotes a state process with an initial value of $x_0$, $u^1(\cdot)\in\mathbb{R}^{k_1}$ and $u^2(\cdot)\in\mathbb{R}^{k_2}$ are employed to represent the follower's control and the leader's control, respectively. The complete trajectories of state process $x^{u^1,u^2}(\cdot)$ along with control processes $u^i(\cdot)$ until time $t$ are referred to as $x^{u^1,u^2}_t:=\big\{x^{u^1,u^2}(s)\big\}_{0\le s\le t}$ and $u^i_t:=\big\{u^i(s)\big\}_{0\le s\le t}$, respectively, for $i=1,2$. $a^i(\cdot),c^i(\cdot)\in L^{\infty}([0,T];\mathbb{R}^{n\times n})$, $b^i(\cdot),d^i(\cdot)\in L^{\infty}([0,T];\mathbb{R}^{n\times k_i})$, $l^i(\cdot), \bar{l}^i(\cdot)\in L^{\infty}([0,T];\mathbb{S}^n)$, $r^i(\cdot)\in L^{\infty}([0,T];\mathbb{S}^{k_i})$, $g^i\in \mathbb{S}^n$, for $i=1,2$.

	As shown by Theorem 2.2 in Section 5.2 of Mao \cite{MXR08}, for any $u^1(\cdot)\in L_{\mathcal{F}}^2([0,T];\mathbb{R}^{k_1})$, $u^2(\cdot)\in L_{\mathcal{F}}^2([0,T];\mathbb{R}^{k_2})$, there exists a unique solution $x^{u^1,u^2}(\cdot)\in L_{\mathcal{F}}^2(\Omega;C([0,T];\mathbb{R}^n))$ to the state equation (\ref{system equation}), with the cost functionals (\ref{cost}) being well-defined.

	The following is a description of our leader-follower stochastic linear-quadratic differential game with elephant memory. To begin with, for each potential choice $u^2(\cdot)\in\mathcal{U}^2[0,T]:=L_{\mathcal{F}}^2([0,T];\mathbb{R}^{k_2})$ selected by the leader, the follower aims to adopt a strategy $\bar{u}^1(\cdot)\in\mathcal{U}^1[0,T]:=L_{\mathcal{F}}^2([0,T];\mathbb{R}^{k_1})$ such that the associated cost functional $J^1(\bar{u}^1(\cdot),u^2(\cdot))$ is the minimum of $J^1(u^1(\cdot),\\u^2(\cdot))$ over $u^1(\cdot)\in\mathcal{U}^1[0,T]$. Subsequently, on the foundation of acquiring the optimal strategy $\bar{u}^1(\cdot)$ decided by the follower, the leader will then seek to formulate their own optimal strategy $\bar{u}^2(\cdot)$ similar to the follower. This means that the leader will adapt their strategy $\bar{u}^2(\cdot)\in\mathcal{U}^2[0,T]$ such that the corresponding cost functional $J^2(\bar{u}^1(\cdot),\bar{u}^2(\cdot))$ is the minimum of $J^2(\bar{u}^1(\cdot),u^2(\cdot))$ over $u^2(\cdot)\in\mathcal{U}^2[0,T]$. In summary, the follower is seeking to establish a mapping $\bar{\alpha}^1[\cdot]:\mathcal{U}^2[0,T]\rightarrow\mathcal{U}^1[0,T]$, while the leader is concerned with determining a control $\bar{u}^2(\cdot)\in\mathcal{U}^2[0,T]$ such that
	\begin{eqnarray*}\left\{\begin{aligned}
		&J^1(\bar{\alpha}^1(u^2(\cdot))[\cdot],u^2(\cdot))=\underset{u^1(\cdot)\in\,\mathcal{U}^1[0,T]}{\inf}J^1(u^1(\cdot),u^2(\cdot)),\ \forall u^2(\cdot)\in\mathcal{U}^2[0,T],\\
		&J^2(\bar{\alpha}^1(\bar{u}^2(\cdot))[\cdot],\bar{u}^2(\cdot))=\underset{u^2(\cdot)\in\,\mathcal{U}^2[0,T]}{\inf}J^2(\bar{\alpha}^1(u^2(\cdot))[\cdot],u^2(\cdot)).
	\end{aligned}\right.\end{eqnarray*}
	The pair $(\bar{\alpha}^1[\cdot],\bar{u}^2(\cdot))$ is designated an open-loop solution to the aforementioned Stackelberg differential game, the associated solution $\bar{x}(\cdot)\equiv x^{\bar{u}^1,\bar{u}^2}(\cdot)$ is known as an optimal trajectory. The primary objective of this paper is to identify and articulate a unique open-loop strategy.
	
	To overcome the challenges posed by the elephant memory, the definition of Fr\'echet derivative and its dual operators theory are rendered as follows.
	
	\begin{mydef}\label{Frechet derivative}
		Assume $\mathbb{X},\mathbb{Y}$ are two Banach spaces, with norms designated as $\parallel\cdot\parallel_{\mathbb{X}}$ and $\parallel\cdot\parallel_{\mathbb{Y}}$ respectively, we define the operator $ \beta:\mathbb{X}\to\mathbb{Y}$. For any $v\in\mathbb{X}$, if there exists a continuous linear operator $ \gamma:\mathbb{X}\to\mathbb{Y}$ such that
		\begin{equation*}
			\lim_{\substack{\parallel h\parallel\to0\\ h\in\mathbb{X}}}\frac{\parallel \beta(v+h)-\beta(v)-\gamma(h)\parallel_{\mathbb{Y}}}{\parallel h\parallel_{\mathbb{X}}}=0,
		\end{equation*}
		then the operator $\beta$ is claimed to be Fr\'echet differentiable at $v$, and $\bigtriangledown_v \beta$ is referred to as the gradient or the Fr\'echet derivative of $\beta$ at $v$.
	\end{mydef}
		Particularly, the statement $\bigtriangledown_v \beta=\beta$ holds true for all $v$ if $\beta$ is a linear operator. This conclusion is crucial in this paper.
		
	Now we demonstrate the dual operator, for a detailed proof, please refer to reference \cite{AO19-2}.
	\begin{mylem}\label{dual operator}
		Assume that the operator $\zeta(t)\equiv\zeta(t,\cdot):S^2_\mathcal{F}(\Omega;C([0,t];\mathbb{R}^n))\mapsto\mathbb{R}$ is uniformly bounded for $t\in[0,T]$, which is continuous and linear. Then the map
		\begin{equation*}
			x\mapsto\mathbb{E}\bigg[\int_{0}^{T}\big\langle \zeta(t),x_t\big\rangle dt\bigg],\ x\in S^2_\mathcal{F}(\Omega;C([0,t];\mathbb{R}^n))
		\end{equation*}
		is a continuous linear functional. In this perspective, with the assistance of Riesz representation theorem, there exists a unique process $\zeta^{\ast}\in S^2_\mathcal{F}(\Omega;C([0,t];\mathbb{R}^n))$ for any $\zeta\in S^2_\mathcal{F}(\Omega;C([0,t];\mathbb{R}^n))$ such that
		\begin{equation}\label{operator2}
			\mathbb{E}\bigg[\int_{0}^{T}\big\langle \zeta(t),x_t\big\rangle dt\bigg]=\mathbb{E}\bigg[\int_{0}^{T}\big\langle \zeta^{\ast}(t),x(t)\big\rangle dt\bigg].
		\end{equation}	
	\end{mylem}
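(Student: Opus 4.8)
\noindent The plan is to read the statement as a Riesz-representation result. Writing $\Lambda(x):=\mathbb{E}\big[\int_0^T\langle\zeta(t),x_t\rangle\,dt\big]$, one has to verify that $\Lambda$ is a well-defined, linear and bounded functional on an appropriate \emph{Hilbert} space of adapted processes, and then to extract $\zeta^\ast$ from the Riesz theorem. The delicate choice is which Hilbert space to use: since the $L^2$-in-time norm of $V^2_\mathcal{F}([0,s];\mathbb{R}^n)$ is monotone in $s$ and arises from an inner product, I would regard $\zeta(t)$ as a bounded linear functional on $V^2_\mathcal{F}([0,t];\mathbb{R}^n)$ and work in $\mathcal{H}:=L^2_\mathcal{F}([0,T];\mathbb{R}^n)$ with $\langle\phi,\psi\rangle_{\mathcal{H}}=\mathbb{E}\int_0^T\langle\phi(t),\psi(t)\rangle\,dt$, rather than in the supremum-type Banach space $S^2_\mathcal{F}(\Omega;C([0,t];\mathbb{R}^n))$, on which the Riesz theorem is not available; for the functionals $\zeta(t)$ produced by Fr\'echet differentiation in Sections 3 and 4 this reading is harmless.

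\noindent First I would give $\Lambda$ a meaning and prove its continuity. Using the measurability of $t\mapsto\langle\zeta(t),x_t\rangle$ (part of the standing hypotheses, and present in all our applications), Fubini's theorem makes $\Lambda$ well defined and legitimises interchanging $\mathbb{E}$ with $\int_0^T$, while linearity of $\Lambda$ is inherited from the linearity of each $\zeta(t,\cdot)$, of the time integral and of $\mathbb{E}[\cdot]$. For boundedness I would set $M:=\sup_{t\in[0,T]}\|\zeta(t)\|_{\mathrm{op}}<\infty$, use $|\langle\zeta(t),x_t\rangle|\le M\|x_t\|_{V^2_\mathcal{F}([0,t];\mathbb{R}^n)}$ together with $\|x_t\|^2_{V^2_\mathcal{F}([0,t];\mathbb{R}^n)}=\mathbb{E}\int_0^t|x(s)|^2\,ds\le\mathbb{E}\int_0^T|x(s)|^2\,ds=\|x\|_{\mathcal{H}}^2$, and integrate in $t$ to obtain $|\Lambda(x)|\le MT\|x\|_{\mathcal{H}}$. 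Thus $\Lambda$ is a bounded linear functional on $\mathcal{H}$.

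\noindent Next I would apply the Riesz representation theorem on $\mathcal{H}$: it yields a unique $\zeta^\ast\in\mathcal{H}$ --- in particular $\mathcal{F}_t$-adapted, being an element of $\mathcal{H}$ --- such that $\Lambda(x)=\langle x,\zeta^\ast\rangle_{\mathcal{H}}=\mathbb{E}\big[\int_0^T\langle\zeta^\ast(t),x(t)\rangle\,dt\big]$ for every $x$, which is exactly (\ref{operator2}). Uniqueness is part of the Riesz statement (equivalently, the difference of two representers is $\mathcal{H}$-orthogonal to every $x$, hence vanishes $dt\otimes d\mathbb{P}$-a.e.). If in addition one insists on the path regularity $\zeta^\ast\in S^2_\mathcal{F}$ recorded in the lemma, a short bootstrap using once more the uniform bound on the $\zeta(t)$ --- or their explicit integral form in the applications --- produces a continuous version.

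\noindent The step I expect to be the real obstacle is the first one: pinning down the correct function space. The hypotheses are phrased with the Banach space $S^2_\mathcal{F}(\Omega;C([0,t];\mathbb{R}^n))$, whose sup-type norm is not Hilbertian, so a direct appeal to Riesz there would at best produce a representer of measure type rather than an $L^2$-process; one must recognise that the argument should be carried out on the companion $L^2$-in-time Hilbert space $\mathcal{H}$, check the $\mathcal{H}$-boundedness of $\Lambda$ in that norm, and only afterwards recover the asserted continuity of $\zeta^\ast$. Everything else --- linearity, the Fubini and measurability bookkeeping, the invocation of Riesz, and uniqueness --- is routine.
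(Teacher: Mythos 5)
The paper does not actually prove this lemma: it states it and defers entirely to the cited reference \cite{AO19-2}, so there is no in-paper argument to match yours line by line. Your reconstruction --- pass from the sup-norm Banach space $S^2_\mathcal{F}(\Omega;C([0,t];\mathbb{R}^n))$ to the Hilbert space $\mathcal{H}=L^2_\mathcal{F}([0,T];\mathbb{R}^n)$, check linearity and the bound $|\Lambda(x)|\le MT\|x\|_{\mathcal{H}}$, and invoke Riesz there (within the adapted subspace, so the representer is automatically adapted) --- is the standard route and is, as far as the citation indicates, the intended one; your diagnosis that Riesz is simply unavailable on the sup-norm space, and that a literal reading would only yield a measure-valued representer, is the right observation and is the reason the lemma as printed is imprecise. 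Two caveats keep this from being a complete proof of the statement as written. First, continuity of $\zeta(t)$ on $S^2$ does not imply continuity on $V^2_\mathcal{F}([0,t];\mathbb{R}^n)$ (the $V^2$ norm is the weaker one, since $\mathbb{E}\int_0^t|x(s)|^2ds\le t\,\mathbb{E}[\sup_{s\le t}|x(s)|^2]$), so your estimate $|\langle\zeta(t),x_t\rangle|\le M\|x_t\|_{V^2}$ genuinely strengthens the hypothesis rather than following from it; you flag this as ``harmless in the applications,'' which is true for the concrete functionals of Lemma \ref{dual operator two}, but it should be stated as a replacement hypothesis, not a reading. Second, the claimed membership $\zeta^\ast\in S^2_\mathcal{F}(\Omega;C([0,t];\mathbb{R}^n))$ does not follow from Riesz on $\mathcal{H}$, and an $L^2$ process need not admit a continuous version; your ``short bootstrap'' is an assertion, and the honest conclusion of your argument is $\zeta^\ast\in\mathcal{H}$, with continuity verifiable only for the explicit representers $\zeta^\ast(t)=\langle\beta,\kappa^t\rangle$ arising in Lemma \ref{dual operator two}. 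Both caveats are defects of the lemma's formulation rather than of your strategy, but a referee would want them stated as such rather than absorbed silently.
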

	
	\begin{mylem}\label{dual operator two}
		If $\zeta(t,\cdot):S^2_\mathcal{F}(\Omega;C([0,t];\mathbb{R}^n))\mapsto S^2_\mathcal{F}(\Omega;C([0,t];\mathbb{R}^n))$ can be expressed by
		\begin{equation*}
			\zeta(t,x_t)=\big\langle \beta, x_t\big\rangle \kappa(t).
		\end{equation*}
		Then, (\ref{operator2}) is satisfied by
		\begin{equation}\label{relation}
			\zeta^{\ast}(t):=\big\langle \beta, {\kappa}^t\big\rangle,
		\end{equation}	
		where $\kappa^t:=\big\{\kappa (t+\theta)\big\}_{\theta \in[0,t]}$.
	\end{mylem}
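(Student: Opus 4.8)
The plan is to check directly that the process defined in (\ref{relation}) realizes the duality (\ref{operator2}), and then to close the argument using the uniqueness already contained in Lemma \ref{dual operator}. Note first that, since $\langle\beta,\cdot\rangle$ is linear and $\beta,\kappa$ are uniformly bounded in $t$, the operator $\zeta(t,\cdot)=\langle\beta,\cdot\rangle\kappa(t)$ inherits the linearity, continuity and uniform boundedness required by Lemma \ref{dual operator}; hence there is exactly one process $\zeta^{\ast}\in S^2_{\mathcal{F}}(\Omega;C([0,t];\mathbb{R}^n))$ for which (\ref{operator2}) holds, so it suffices to produce one such $\zeta^{\ast}$ and then identify it with $\langle\beta,\kappa^t\rangle$.

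First I would substitute $\zeta(t,x_t)=\langle\beta,x_t\rangle\kappa(t)$ into the left-hand side of (\ref{operator2}) and write the inner pairing $\langle\beta,x_t\rangle$ out as an integral over $s\in[0,t]$ of the relevant pointwise pairing involving $x(s)$. This expresses the left-hand side as an iterated integral, inside the expectation, over the triangle $\{(s,t):0\le s\le t\le T\}$, whose integrand carries the frozen value $x(s)$ together with the coefficient $\kappa(t)$. Second, I would justify interchanging the order of the two time integrations, together with the expectation, by Fubini--Tonelli: using $\beta,\kappa\in L^{\infty}$ in time and $x\in S^2_{\mathcal{F}}(\Omega;C([0,T];\mathbb{R}^n))$, the integrand admits an integrable majorant, so the interchange is legitimate and one may take $s$ as the outer variable with $t$ ranging over $[s,T]$.

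The step carrying the actual content is then to recognise the remaining $t$-integral over $[s,T]$ as the pairing of $\beta$ against the time-shifted trajectory $\kappa^s=\{\kappa(s+\theta)\}_{\theta}$ --- obtained through the substitution $t=s+\theta$ --- so that the factor multiplying $x(s)$ in the integrand equals exactly $\langle\beta,\kappa^s\rangle$. Consequently the left-hand side of (\ref{operator2}) becomes $\mathbb{E}\big[\int_0^T\big\langle\langle\beta,\kappa^s\rangle,x(s)\big\rangle\,ds\big]$, which is precisely its right-hand side once we set $\zeta^{\ast}(s)=\langle\beta,\kappa^s\rangle$. Since this process belongs to $S^2_{\mathcal{F}}(\Omega;C([0,t];\mathbb{R}^n))$ --- again by boundedness of $\beta$ and $\kappa$ and because the time shift preserves measurability and adaptedness on $[0,T]$ --- and since (\ref{operator2}) determines $\zeta^{\ast}$ uniquely, the identity (\ref{relation}) follows.

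I expect the only real obstacle to be bookkeeping: making the time-shift indexing agree exactly with the definition $\kappa^t=\{\kappa(t+\theta)\}_{\theta\in[0,t]}$ (the domain of $\theta$ and the range of $t+\theta$), and verifying carefully that the Fubini interchange is licit when the coefficients are merely $L^{\infty}$ in time while the state is only square-integrable in $\omega$. Once these points are settled, the remaining manipulations are routine and run parallel to the duality computation of \cite{AO19-2}.
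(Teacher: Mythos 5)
The paper states this lemma without any proof at all --- it is presented as a companion to Lemma \ref{dual operator}, whose justification is deferred to \cite{AO19-2} --- so there is no in-paper argument to compare yours against. Your route (substitute the explicit form of $\zeta$, unfold the pairing $\langle\beta,x_t\rangle$ as an integral over $s\in[0,t]$, interchange the two time integrals by Fubini--Tonelli using the $L^{\infty}$ bounds on $\beta,\kappa$ and the square-integrability of $x$, then recognise the inner $t$-integral over $[s,T]$ as $\langle\beta,\kappa^s\rangle$ via the substitution $t=s+\theta$, and conclude by the uniqueness furnished by Lemma \ref{dual operator}) is the standard derivation of such a dual representation and is sound in outline. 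The one point you flag as ``bookkeeping'' is in fact the only genuinely delicate step: with the paper's convention $\kappa^t=\{\kappa(t+\theta)\}_{\theta\in[0,t]}$ the shifted trajectory only reaches up to time $2t$, whereas the Fubini swap naturally produces an integral of $\kappa$ over all of $[s,T]$; reconciling the two requires either the paper's convention that coefficients vanish on $(T,2T]$ together with a restriction of the type $2t\ge T$, or a truncation of the form $\int_t^{(t+\theta)\wedge T}$ as the paper itself uses in Lemma \ref{lem4.1}. You correctly identify this as the place where care is needed, so your proposal is acceptable as a proof sketch, but a complete write-up must pin down that indexing explicitly rather than leave it as a remark.
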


\section{Optimization problem for the follower}

	We formulate the state feedback of the follower's optimal control based on the description of the Stackelberg differential game provided in Section 2. Essentially, this problem can be viewed as a stochastic linear-quadratic optimal control problem with elephant memory, regardless of the selection $u^2(\cdot)$ of the leader.

	Then, we present a detailed statement intended to resolve the optimization problem concerning the follower.

	Problem \textbf{(F-EMLQ)}: Seek a control $u^1(\cdot)$ of the follower such that (\ref{system equation}) is maintained when the cost functional $J^1(u^1(\cdot),u^2(\cdot))$ is minimized over $\mathcal{U}^1[0,T]$ for any $u^2(\cdot)\in\mathcal{U}^2[0,T]$.
	
	Firstly, we introduce the adjoint equation as follows:
	\begin{equation}\left\{\begin{aligned}\label{m1-equation}
			dm^1(t)=&\ \bigg\{-a^1(t)^\top m^1(t)-c^1(t)^\top n^1(t)-l^1(t)x^{\bar{u}^1,u^2}(t)-\mathbb{E}^{\mathcal{F}_t}\Big[\Big(a^2(t)^\top m^1(t)\\
			        &\ +c^2(t)^\top n^1(t)+\bar{l}^1(t)x^{\bar{u}^1,u^2}_t\Big)^\ast \Big]\bigg\}dt+n^1(t)dW(t),\ t\in[0,T],\\
			 m^1(T)=&\ g^1x^{\bar{u}^1,u^2}(T),\ m^1(t)=0,\ t\in(T,2T]; \ n^1(t)=0,\ t\in[T,2T].
	\end{aligned}\right.\end{equation}
	It is evident that (\ref{m1-equation}) constitutes an ABSDE, which is coupled with state equation (\ref{system equation}) (see \cite{AO19-2, FGWX24}). The ABSDE was initially proposed by Peng and Yang in \cite{PY09}, and then by Theorem 3.4 of \cite{FGWX24}, (\ref{m1-equation}) admits the unique solution $(m^1(\cdot),n^1(\cdot))\in L_{\mathcal{F}}^2(\Omega;C([0,T];\mathbb{R}^n))\times L_{\mathcal{F}}^2([0,T];\mathbb{R}^n)$.
	
	Subsequent to the establishment of the maximum principle in \cite{AO19-2, FGWX24}, the conclusion is outlined regarding the optimal control of the follower.
	
	\begin{mythm}\label{thm3.1}
		For any $u^2(\cdot)\in\mathcal{U}^2[0,T]$, suppose $\bar{u}^1(\cdot)$ is the optimal control of the follower, the corresponding optimal state trajectory indicated as $x^{\bar{u}^1,u^2}(\cdot)$ for the Problem (\textbf{F-EMLQ}), and $\bar{l}^1(t)= r^1(t)=0$ holds true for $t\in [T,2T]$, then the result follows that the optimal control $\bar{u}^1(\cdot)$ satisfies
		\begin{equation}\label{u1-optimal}
			\mathbb{E}^{\mathcal{F}_t}\Big\{\big[b^1(t)^\top m^1(t)+d^1(t)^\top n^1(t)+ r^1(t)\bar{u}^1_t\big]^\ast \Big\}=0,\ a.e.\ t\in[0,T],\ \mathbb{P}\mbox{-}a.s..
		\end{equation}
	\end{mythm}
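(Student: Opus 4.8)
The plan is to establish (\ref{u1-optimal}) as the first-order necessary condition of the stochastic maximum principle for elephant-memory systems developed in \cite{AO19-2, FGWX24}, specialized to the linear-quadratic data of Problem (\textbf{F-EMLQ}). Since $\mathcal{U}^1[0,T]$ is the whole linear space $L^2_{\mathcal{F}}([0,T];\mathbb{R}^{k_1})$, a convex perturbation suffices: fix $v(\cdot)\in\mathcal{U}^1[0,T]$ and $\epsilon\in\mathbb{R}$, put $u^{1,\epsilon}:=\bar{u}^1+\epsilon v$, and let $x^{\epsilon}$ be the corresponding state. Because (\ref{system equation}) is linear in $(x,u^1)$, the difference quotient $y:=\epsilon^{-1}\big(x^{\epsilon}-x^{\bar{u}^1,u^2}\big)$ does not depend on $\epsilon$ and is, by Theorem 2.2 in Section 5.2 of \cite{MXR08}, the unique solution in $L^2_{\mathcal{F}}(\Omega;C([0,T];\mathbb{R}^n))$ of the linear elephant-memory SDE
\begin{equation*}
\left\{\begin{aligned}
dy(t)=&\ \big[a^1(t)y(t)+a^2(t)y_t+b^1(t)v_t\big]dt+\big[c^1(t)y(t)+c^2(t)y_t+d^1(t)v_t\big]dW(t),\ t\in[0,T],\\
y(0)=&\ 0.
\end{aligned}\right.
\end{equation*}
As $\epsilon\mapsto J^1(\bar{u}^1+\epsilon v,u^2)$ is a quadratic polynomial with a minimum at $\epsilon=0$ and $v$ runs over a linear space, its derivative at $\epsilon=0$ vanishes; expanding the quadratic cost and dividing out the common factor $2$, this reads
\begin{equation*}
\mathbb{E}\bigg\{\int_0^T\Big[\big\langle l^1(t)x^{\bar{u}^1,u^2}(t),y(t)\big\rangle+\big\langle\bar{l}^1(t)x^{\bar{u}^1,u^2}_t,y_t\big\rangle+\big\langle r^1(t)\bar{u}^1_t,v_t\big\rangle\Big]dt+\big\langle g^1x^{\bar{u}^1,u^2}(T),y(T)\big\rangle\bigg\}=0.
\end{equation*}

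Next I would apply It\^o's formula to $t\mapsto\big\langle m^1(t),y(t)\big\rangle$ on $[0,T]$, using the adjoint ABSDE (\ref{m1-equation}) for $dm^1$ and the variational equation above for $dy$. The drift terms carrying $a^1$ and $c^1$ cancel against $\big\langle m^1(t),a^1(t)y(t)\big\rangle$ and $\big\langle n^1(t),c^1(t)y(t)\big\rangle$; since $y(0)=0$ and $m^1(T)=g^1x^{\bar{u}^1,u^2}(T)$, taking expectations identifies $\mathbb{E}\big\langle g^1x^{\bar{u}^1,u^2}(T),y(T)\big\rangle$ with the expectation of the time-integral over $[0,T]$ of $-\big\langle l^1(t)x^{\bar{u}^1,u^2}(t),y(t)\big\rangle$, the pairings $\big\langle m^1(t),a^2(t)y_t+b^1(t)v_t\big\rangle+\big\langle n^1(t),c^2(t)y_t+d^1(t)v_t\big\rangle$, and the anticipated term $-\big\langle\mathbb{E}^{\mathcal{F}_t}\big[\big(a^2(t)^\top m^1(t)+c^2(t)^\top n^1(t)+\bar{l}^1(t)x^{\bar{u}^1,u^2}_t\big)^\ast\big],y(t)\big\rangle$. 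The crucial step is to dispose of the memory pairings $\big\langle m^1(t),a^2(t)y_t\big\rangle$, $\big\langle n^1(t),c^2(t)y_t\big\rangle$ and $\big\langle\bar{l}^1(t)x^{\bar{u}^1,u^2}_t,y_t\big\rangle$: by Lemma \ref{dual operator}, with Lemma \ref{dual operator two} supplying the explicit dual through the forward shift $\kappa^t$, combined with the tower property of conditional expectation, any term of the form $\mathbb{E}\int_0^T\big\langle\phi(t),y_t\big\rangle dt$ with an $\mathcal{F}_t$-measurable coefficient $\phi(t)$ equals $\mathbb{E}\int_0^T\big\langle\mathbb{E}^{\mathcal{F}_t}[\phi(t)^\ast],y(t)\big\rangle dt$. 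This is exactly why the anticipated terms of (\ref{m1-equation}) are written with $(\,\cdot\,)^\ast$ and $\mathbb{E}^{\mathcal{F}_t}$, and it clarifies the hypothesis $\bar{l}^1(t)=0$ on $[T,2T]$: it guarantees that the forward-looking dual $(\bar{l}^1x_t)^\ast$ is well defined on $[0,T]$, just as the prescribed extensions $m^1\equiv 0$ on $(T,2T]$ and $n^1\equiv 0$ on $[T,2T]$ make the duals of the $a^2$- and $c^2$-terms well posed. After these substitutions the $a^2$- and $c^2$-memory pairings cancel the two anticipated adjoint contributions, and the $\bar{l}^1$ anticipated contribution cancels the running-cost memory term $\big\langle\bar{l}^1(t)x^{\bar{u}^1,u^2}_t,y_t\big\rangle$.

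Substituting the resulting identity back into the first-variation equation, the state-dependent terms cancel and one is left with
\begin{equation*}
\mathbb{E}\int_0^T\big\langle b^1(t)^\top m^1(t)+d^1(t)^\top n^1(t)+r^1(t)\bar{u}^1_t,\,v_t\big\rangle dt=0\qquad\text{for all }v(\cdot)\in\mathcal{U}^1[0,T].
\end{equation*}
A final application of Lemma \ref{dual operator} transfers this pairing off the past trajectory $v_t$ onto $v(t)$ by passing to the dual $(\,\cdot\,)^\ast$ — here the hypothesis $r^1(t)=0$ on $[T,2T]$ makes $(r^1\bar{u}^1_t)^\ast$ well defined — and then conditioning on $\mathcal{F}_t$ via the tower property and using the arbitrariness of the $\mathcal{F}_t$-adapted process $v$ yields
\begin{equation*}
\mathbb{E}^{\mathcal{F}_t}\Big\{\big[b^1(t)^\top m^1(t)+d^1(t)^\top n^1(t)+r^1(t)\bar{u}^1_t\big]^\ast\Big\}=0,\quad a.e.\ t\in[0,T],\ \mathbb{P}\mbox{-}a.s.,
\end{equation*}
which is (\ref{u1-optimal}). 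I expect the main obstacle to be the bookkeeping in the second paragraph: one must check that the memory pairings produced by It\^o's formula are matched, through the dual operators of Lemmas \ref{dual operator}--\ref{dual operator two} and the tower property, precisely by the anticipated terms of the adjoint ABSDE — with the correct handling of the $(T,2T]$-extensions of $m^1,n^1,\bar{l}^1,r^1$ — so that no boundary terms survive; the remainder is the standard linear-quadratic variational calculus.
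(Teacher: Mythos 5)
Your proposal is correct and follows essentially the same route as the paper: a convex perturbation of $\bar{u}^1$ in the linear space $\mathcal{U}^1[0,T]$, the first variation of the quadratic cost, It\^o's formula applied to the pairing of $m^1(\cdot)$ with the state difference, and Lemmas \ref{dual operator}--\ref{dual operator two} (with the tower property and the prescribed $(T,2T]$-extensions) to convert the memory pairings into the anticipated terms of (\ref{m1-equation}). Your version spells out the cancellation bookkeeping in more detail than the paper's proof, but the underlying argument is the same.
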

	\begin{proof}
		For any $v^1(\cdot)\in\mathcal{U}^1[0,T]$, define $u^{1\rho}(\cdot):=\bar{u}^1(\cdot)+\rho(v^1(\cdot)-\bar{u}^1(\cdot))$, $\rho\in[0,1]$. Assume the state trajectories $x^{v^1,u^2}(\cdot)$ and $x^{u^{1\rho},u^2}(\cdot)$ correspond to $v^1(\cdot),u^{1\rho}(\cdot)$, respectively. This yields
		\begin{equation*}\begin{aligned}
				&J^1(u^{1\rho}(\cdot),u^2(\cdot))-J^1(\bar{u}^1(\cdot),u^2(\cdot))\\
			    =\ &2\rho\mathbb{E}\bigg\{\int_0^T\bigg[\Big\langle l^1(t)x^{\bar{u}^1,u^2}(t),x^{v^1,u^2}(t)-x^{\bar{u}^1,u^2}(t)\Big\rangle
                +\Big\langle \bar{l}^1(t)x^{\bar{u}^1,u^2}_t,x^{v^1,u^2}_t-x^{\bar{u}^1,u^2}_t\Big\rangle\\
				&\quad\quad\quad+\Big\langle  r^1(t)\bar{u}^1_t,v^1_t-\bar{u}^1_t\Big\rangle\bigg]dt+\Big\langle g^1x^{\bar{u}^1,u^2}(T),x^{v^1,u^2}(T)-x^{\bar{u}^1,u^2}(T)\Big\rangle\bigg\}\\
		\end{aligned}\end{equation*}
        \begin{equation*}\begin{aligned}		
             &+\rho^2\mathbb{E}\bigg\{\int_0^T\bigg[\Big\langle l^1(t)\big(x^{v^1,u^2}(t)-x^{\bar{u}^1,u^2}(t)\big),x^{v^1,u^2}(t)-x^{\bar{u}^1,u^2}(t)\Big\rangle\\
             &\qquad\qquad\quad +\Big\langle \bar{l}^1(t)\big(x^{v^1,u^2}_t-x^{\bar{u}^1,u^2}_t\big),x^{v^1,u^2}_t-x^{\bar{u}^1,u^2}_t\Big\rangle+\Big\langle  r^1(t)\big(v^1_t-\bar{u}^1_t\big),v^1_t-\bar{u}^1_t\Big\rangle\bigg]dt\\
             &\qquad\quad +\Big\langle g^1\big(x^{v^1,u^2}(T)-x^{\bar{u}^1,u^2}(T)\big),x^{v^1,u^2}(T)-x^{\bar{u}^1,u^2}(T)\Big\rangle\bigg\}.
		\end{aligned}\end{equation*}
	    Next, by applying It\^o's formula to $\big\langle m^1(\cdot),x^{v^1,u^2}(\cdot)-x^{\bar{u}^1,u^2}(\cdot)\big\rangle$ and substituting into the above, we obtain the following deduction
		\begin{equation}\begin{aligned}\label{rho}
				&J^1(u^{1\rho}(\cdot),u^2(\cdot))-J^1(\bar{u}^1(\cdot),u^2(\cdot))\\
			  =\ &2\rho\mathbb{E}\int_0^T\bigg[\Big\langle \mathbb{E}^{\mathcal{F}_t}\big[b^1(t)^\top m^1(t)+d^1(t)^\top n^1(t)+ r^1(t)\bar{u}^1_t\big]^\ast,v^1(t)-\bar{u}^1(t)\Big\rangle\bigg]dt+o(\rho).
		\end{aligned}\end{equation}
	    The division of both sides of (\ref{rho}) by $\rho$, we gain
		\begin{equation*}\begin{aligned}
				0&\geq\frac{1}{\rho}\big[J^1(u^{1\rho}(\cdot),u^2(\cdot))-J^1(\bar{u}^1(\cdot),u^2(\cdot))\big]\\
				&=2\mathbb{E}\int_0^T\bigg[\Big\langle \mathbb{E}^{\mathcal{F}_t}\big[b^1(t)^{\top}m^1(t)+d^1(t)^{\top}n^1(t)+ r^1(t)\bar{u}^1_t\big]^{\ast },v^1(t)-\bar{u}^1(t)\Big\rangle\bigg]dt
               +\frac{o(\rho)}{\rho}.
		\end{aligned}\end{equation*}
		At last, letting $\rho\rightarrow 0$, thus we accomplish this proof in the fact of the arbitrariness to $v^1(\cdot)$.
	\end{proof}
	
	Secondly, we attempt to construct the feedback expression of $\bar{u}^1_{\cdot}$ from (\ref{u1-optimal}). Due to certain technical limitations, the following assumptions have been imposed on the coefficients of (\ref{system equation}) and (\ref{cost}) to achieve this particular objective:
	\begin{equation*}
		\textbf{(A1)}\left\{\begin{aligned}
			&c^1(t)^\top \Pi ^1(t)d^1(t)+\Pi ^1(t)b^1(t)=0,\ a.e.\ t\in[0,T],\\
			&c^1(t)^\top \Pi ^1(t)c^2(t)+\Pi ^1(t)a^2(t)=0,\ a.e.\ t\in[0,T],\\
			&c^2(t)^\top \Pi ^1(t)c^2(t)+\bar{l}^1(t)-c^2(t)^\top \Pi ^1(t)d^1(t)[\Xi^1(t)]^{-1}d^1(t)^\top \Pi ^1(t)c^2(t)=0,\ a.e.\ t\in[0,T],
		\end{aligned}\right.
	\end{equation*}
	where \ $\Xi^1(t):=r^1(t)+d^1(t)^\top \Pi ^1(t)d^1(t)>0,\ \forall\ t\in[0,T]$, and $\Pi^1(\cdot)$ is the solution to the Riccati equation as follows:
	\begin{equation}\left\{\begin{aligned}\label{P1-equation}
			\dot{\Pi}^1(t)=&-\Pi ^1(t)a^1(t)-a^1(t)^\top \Pi ^1(t)-c^1(t)^\top \Pi ^1(t)c^1(t)-l^1(t),\ t\in[0,T],\\
			      \Pi ^1(T)=&\ g^1,\ \Pi ^1(t)=0,\ t\in(T,2T].
		\end{aligned}\right.\end{equation}
	
	We introduce another ABSDE satisfied by $(\eta^1(\cdot),\bar{\eta}^1(\cdot))$:
	\begin{equation}\left\{\begin{aligned}\label{eta1-equation}
			d\eta^1(t)=&\bigg\{-a^1(t)^\top \eta^1(t)-c^1(t)^\top \bar{\eta}^1(t)-\mathbb{E}^{\mathcal{F}_t}\Big[\big(a^2(t)^\top \eta^1(t)+c^2(t)^\top \bar{\eta}^1(t)\big)^\ast\Big]\\
			&+\big[c^2(t)^\top \Pi ^1(t)d^1(t)\big]^\ast \big[\Xi^{1\ast}(t)\big]^{-1}\Big\{\mathbb{E}^{\mathcal{F}_t}\big[\big(b^1(t)^\top \eta^1(t)+d^1(t)^\top \bar{\eta}^1(t)\\
			&+d^1(t)^\top \Pi ^1(t)d^2(t)u^2_t\big)^\ast \big]\Big\}-\big[\Pi ^1(t)b^2(t)+c^1(t)^\top \Pi ^1(t)d^2(t)\big]u^2_t\\
			&-\mathbb{E}^{\mathcal{F}_t}\Big\{ \big[c^2(t)^\top \Pi ^1(t)d^2(t)u^2_t\big]^\ast \Big\}\bigg\}dt+\bar{\eta}^1(t)dW(t),\ t\in[0,T],\\
			\eta^1(t)=&\ \bar{\eta}^1(t)=0,\ t\in[T,2T].
		\end{aligned}\right.\end{equation}
	
	\begin{Remark}\label{rem3.1}
		As illustrated in equation (\ref{P1-equation}), this Riccati equation is less complex than equation (7) in \cite{MS22}, which is essentially a linear {\it ordinary differential equation} (ODE for short). Additionally, combined with the fact that the coefficients are bounded, (\ref{P1-equation}) admits the unique solution $\Pi^1(\cdot)\in C([0,T];\mathbb{S}^n)$. 	
	\end{Remark}
	
	\begin{Remark}\label{rem3.2}
		There exists a unique process pair $(\eta^1(\cdot),\bar{\eta}^1(\cdot))\in L_{\mathcal{F}}^2(\Omega;C([0,T];\mathbb{R}^n))\times L_{\mathcal{F}}^2([0,T];\\\mathbb{R}^n)$ such that the ABSDE (\ref{eta1-equation}) is satisfied for any $u^2(\cdot)\in\mathcal{U}^2[0,T]$, the conclusion derived by the boundedness of the coefficients (see \cite{FGWX24}).
	\end{Remark}
	
	Thirdly, the necessary and sufficient conditions of Problem \textbf{(F-EMLQ)} for solvability are derived.
	\begin{mythm}\label{thm3.2}
		Assuming that \textbf{(A1)} holds, let $\bar{l}^1(t)=r^1(t)=0$ for $t\in [T,2T]$. Suppose $\Pi^1(\cdot)$ as well as $(\eta^1(\cdot),\bar{\eta}^1(\cdot))$ is the unique solution of (\ref{P1-equation}) and (\ref{eta1-equation}), respectively.
		Then for any $u^2(\cdot)\in\mathcal{U}^2[0,T]$, it is concluded that the problem \textbf{(F-EMLQ)} is solvable and the optimal control of elephant memory $\bar{u}^1_{\cdot}$ takes the following state feedback form:
		\begin{equation}\begin{aligned}\label{u1-feedback}
				\bar{u}^1_t=&-\big[\Xi^1(t)\big]^{-1}\Big\{d^1(t)^\top \Pi ^1(t)c^2(t)x^{\bar{u}^1,u^2}_t+b^1(t)^\top \eta^1(t)+d^1(t)^\top \bar{\eta}^1(t)\\
				&\hspace{2cm}+d^1(t)^\top \Pi ^1(t)d^2(t)u^2_t\Big\},\ a.e.\  t\in[0,T],\ \mathbb{P}\mbox{-}a.s..
		\end{aligned}\end{equation}
		Furthermore, the optimal cost can be described as follows:
		\begin{equation}\begin{aligned}\label{follower-optimal-cost}
				&\underset{u^1(\cdot)\in\,\mathcal{U}^1[0,T]}{\inf}J^1(u^1(\cdot),u^2(\cdot))=J^1(\bar{u}^1(\cdot),u^2(\cdot))\\
				&=\mathbb{E}\bigg\{\big\langle \Pi^1(0)x_0+2\eta^1(0),x_0\big\rangle+\int_0^T\bigg[\big\langle \Pi ^1(t)d^2(t)u^2_t,d^2(t)u^2_t\big\rangle\\
				&\quad\quad+2\big\langle b^2(t)^\top \eta^1(t)+d^2(t)^\top \bar{\eta}^1(t),u^2_t\big\rangle-\Big|\big[\Xi^1(t)\big]^{-\frac{1}{2}}\big[b^1(t)^\top \eta^1(t)\\
				&\quad\quad+d^1(t)^\top \bar{\eta}^1(t)+d^1(t)^\top \Pi ^1(t)d^2(t)u^2_t\big]\Big|^2\bigg]dt\bigg\}.
		\end{aligned}\end{equation}
	\end{mythm}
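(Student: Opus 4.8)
The plan is to verify that the feedback control \eqref{u1-feedback} satisfies the necessary condition \eqref{u1-optimal} from Theorem \ref{thm3.1}, and then to show that this condition is in fact sufficient for optimality, after which the value of the optimal cost \eqref{follower-optimal-cost} is obtained by a completion-of-squares computation. The central structural idea is the ansatz
\begin{equation*}
	m^1(t)=\Pi^1(t)x^{\bar u^1,u^2}(t)+\eta^1(t),
\end{equation*}
relating the first adjoint component $m^1$ to the optimal state. First I would differentiate this relation using Itô's formula, plugging in the dynamics \eqref{system equation} for $x^{\bar u^1,u^2}$, the Riccati equation \eqref{P1-equation} for $\dot\Pi^1$, and the ABSDE \eqref{eta1-equation} for $d\eta^1$. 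Matching the $dW(t)$ terms identifies the diffusion component $n^1(t)$ in terms of $\Pi^1(t)\big(c^1(t)x^{\bar u^1,u^2}(t)+c^2(t)x^{\bar u^1,u^2}_t+d^1(t)\bar u^1_t+d^2(t)u^2_t\big)+\bar\eta^1(t)$. Substituting this $n^1$, together with the ansatz for $m^1$, into the optimality relation \eqref{u1-optimal} yields, after using the dual-operator identities of Lemma \ref{dual operator} and Lemma \ref{dual operator two} to handle the $(\cdot)^\ast$ terms and the conditional expectations, an equation that can be solved for $\bar u^1_t$; the assumption that $\Xi^1(t)=r^1(t)+d^1(t)^\top\Pi^1(t)d^1(t)>0$ is exactly what makes this inversion legitimate and produces precisely \eqref{u1-feedback}.

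Next I would check consistency: the drift terms in $dm^1$ coming from the ansatz must reproduce the drift prescribed by the adjoint equation \eqref{m1-equation}. This is where the three algebraic constraints in \textbf{(A1)} enter — they are engineered so that, after inserting the feedback form of $\bar u^1_t$ and collecting the coefficients of $x^{\bar u^1,u^2}(t)$, of $x^{\bar u^1,u^2}_t$, and of $u^2_t$ separately, every term cancels or combines into the drift of \eqref{eta1-equation}. The coefficient of $x(t)$ cancels by the Riccati equation \eqref{P1-equation}; the coefficients involving $x_t$ and the memory-dual terms cancel by the first and third identities in \textbf{(A1)} (the third being the one that absorbs the $d^1[\Xi^1]^{-1}d^1{}^\top$ feedback correction); and the $u^2_t$-terms are precisely what was built into the nonhomogeneous part of \eqref{eta1-equation}. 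This bookkeeping, together with the dual-operator manipulations needed to move the $\ast$ and the $\mathbb{E}^{\mathcal F_t}$ past the coefficients, is the main obstacle — it is lengthy and one must be careful that the noncommutativity of the memory operators with the (deterministic, but time-dependent) coefficients is tracked correctly, which is exactly the role of Lemma \ref{dual operator two} with the shifted kernel $\kappa^t$.

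For sufficiency, I would use the convexity of $u^1(\cdot)\mapsto J^1(u^1(\cdot),u^2(\cdot))$: since $l^1,\bar l^1\ge 0$, $g^1\ge 0$ and $r^1>0$ (the latter guaranteed on $[0,T]$ through $\Xi^1>0$), the cost functional is convex in $u^1$, so the first-order condition \eqref{u1-optimal}, which \eqref{u1-feedback} satisfies, is also sufficient; hence Problem \textbf{(F-EMLQ)} is solvable with optimal control \eqref{u1-feedback}. Finally, to obtain \eqref{follower-optimal-cost}, I would apply Itô's formula to $\langle\Pi^1(t)x^{\bar u^1,u^2}(t),x^{\bar u^1,u^2}(t)\rangle$ and to $\langle\eta^1(t),x^{\bar u^1,u^2}(t)\rangle$ over $[0,T]$, take expectations (the martingale parts vanish), use the Riccati equation and the ABSDE \eqref{eta1-equation} to replace the $dt$-integrands, and complete the square in $\bar u^1_t$. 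The square that appears is exactly $\big|[\Xi^1(t)]^{-1/2}(b^1{}^\top\eta^1+d^1{}^\top\bar\eta^1+d^1{}^\top\Pi^1 d^2 u^2_t)\big|^2$, and collecting the remaining boundary term $\langle\Pi^1(0)x_0+2\eta^1(0),x_0\rangle$ and the $u^2_t$-dependent integrand gives \eqref{follower-optimal-cost}. Throughout, the dual-operator lemmas are invoked to rewrite all memory integrals $\int_0^T\langle\,\cdot\,,x_t\rangle dt$ as integrals against $x(t)$ so that the Itô calculus applies cleanly.
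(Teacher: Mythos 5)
Your overall toolkit (It\^o's formula on $\langle\Pi^1 x,x\rangle$ and $\langle\eta^1,x\rangle$, the assumption \textbf{(A1)}, completion of squares) is the right one, but the logical architecture has a gap at the sufficiency step. You derive the candidate \eqref{u1-feedback} from the first-order condition \eqref{u1-optimal} and then claim it is optimal ``by convexity, since $l^1,\bar l^1\ge 0$, $g^1\ge 0$ and $r^1>0$.'' None of these sign conditions is a hypothesis of the theorem: the only positivity assumed is $\Xi^1(t)=r^1(t)+d^1(t)^\top\Pi^1(t)d^1(t)>0$, which does not imply $r^1>0$ (Remark \ref{rem4.4} explicitly allows $r^1\equiv 0$), and in the advertising example of Section 5 the weights $l^1,\bar l^1$ are negative. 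So convexity of $u^1\mapsto J^1(u^1,u^2)$ is not available from the stated hypotheses, and your argument for solvability does not close. (A smaller slip of the same kind: $\Xi^1>0$ does not ``guarantee $r^1>0$ on $[0,T]$.'')

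The paper closes this differently, and more economically: apply It\^o's formula to $\langle\Pi^1(\cdot)x^{u^1,u^2}(\cdot),x^{u^1,u^2}(\cdot)\rangle$ and $2\langle\eta^1(\cdot),x^{u^1,u^2}(\cdot)\rangle$ for an \emph{arbitrary} $u^1(\cdot)\in\mathcal{U}^1[0,T]$, not just along the candidate optimal trajectory. Together with \textbf{(A1)} and \eqref{P1-equation} this produces the identity \eqref{eq3.9}, in which $J^1(u^1(\cdot),u^2(\cdot))$ equals a quantity independent of $u^1$ plus the nonnegative term
\begin{equation*}
\int_0^T\Xi^1(t)\Big|u^1_t-\bar u^1_t\Big|^2dt,
\end{equation*}
with $\bar u^1_t$ given by \eqref{u1-feedback}. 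Since $\Xi^1>0$, this single identity simultaneously yields solvability, the optimality of \eqref{u1-feedback}, and the value \eqref{follower-optimal-cost}; no appeal to Theorem \ref{thm3.1}, to the ansatz $m^1=\Pi^1x+\eta^1$, or to convexity is needed. Your It\^o computation in the final paragraph is exactly this calculation restricted to $u^1=\bar u^1$; running it for general $u^1$ and keeping the cross terms is what turns it from an evaluation of the cost at the candidate into a proof of optimality.
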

	\begin{proof}
		For any $u^1(\cdot)\in\mathcal{U}^1[0,T]$, the application of It\^o's formula to $\langle \Pi^1(\cdot)x^{u^1,u^2}(\cdot),x^{u^1,u^2}(\cdot)\rangle$ and $2\langle\eta^1(\cdot),x^{u^1,u^2}(\cdot)\rangle$, in conjunction with \textbf{(A1)} and a series of calculations, leads to the following deduction
		\begin{equation}\begin{aligned}\label{eq3.9}
			   &J^1(u^1(\cdot),u^2(\cdot))\\
		     =&\ \mathbb{E}\bigg\{\langle \Pi^1(0)x_0+2\eta^1(0),x_0\rangle+\int_0^T\bigg[\big\langle \Pi ^1(t)d^2(t)u^2_t,d^2(t)u^2_t\big\rangle+2\big\langle b^2(t)^\top \eta^1(t)\\
			   &\quad+d^2(t)^\top \bar{\eta}^1(t),u^2_t\big\rangle-\Big|\big[\Xi^1(t)\big]^{-\frac{1}{2}}\big[b^1(t)^\top \eta^1(t)+d^1(t)^\top \bar{\eta}^1(t)+d^1(t)^\top \Pi ^1(t)d^2(t)u^2_t\big]\Big|^2\\
				&\quad +\Xi^1(t)\Big|u^1_t+\big[\Xi^1(t)\big]^{-1}\big\{d^1(t)^\top \Pi ^1(t)c^2(t)x^{\bar{u}^1,u^2}_t+b^1(t)^\top \eta^1(t)+d^1(t)^\top \bar{\eta}^1(t)\\
				&\quad+d^1(t)^\top \Pi^1(t)d^2(t)u^2_t\big\}\Big|^2\bigg]dt\bigg\},
		\end{aligned}\end{equation}
		which indicates that (\ref{u1-feedback}) represents the optimal control and (\ref{follower-optimal-cost}) is established.	
	\end{proof}
	
	\begin{Remark}\label{rem3.3}
		In the process of solving the follower's problem, we address the challenges posed by the elephant memory with the help of Lemmas \ref{dual operator}, \ref{dual operator two} and derive the result (\ref{u1-optimal}) in Theorem \ref{thm3.1}, which is a complex process defined over the interval $[0,T]$ and serves as a further generalization of the conclusions drawn from the corresponding problem that only involves point delays in literatures. Moreover, Lemmas \ref{dual operator} and \ref{dual operator two} are instrumental in obtaining the follower's state feedback of optimal control, and the conclusion (\ref{follower-optimal-cost}) in Theorem (\ref{thm3.2}) contains the elephant memory of the state.
	\end{Remark}
	
	\begin{Remark}\label{rem3.4}
		The assumption \textbf{(A1)} is imposed to remove the difficulties presented by the elephant memory. It is evident that this assumption is more robust than the one presented in \cite{MS22}. The consideration has been confined to Fr\'echet derivative of continuous linear operators and dual theory, and it is acknowledged that the higher order theories are more intricate, we intend to revisit these in future research. Moreover, this is attributable to the fact that the feedback of $\bar{u}^1_t$ at time $t$ in this game problem will comprise the state of elephant memory $x^{\bar{u}^1,u^2}_t:=\big\{x^{\bar{u}^1,u^2}(s)\big\}_{0\le s\le t}$ and the state $[x^{\bar{u}^1,u^2}_t]^\ast=\big\{x^{\bar{u}^1,u^2}(s+\theta)\big\}_{0\le \theta \le s,\ 0\le s\le t}$. In particular, assumption $c^1(\cdot)^\top \Pi^1(\cdot)d^1(\cdot)+\Pi^1(\cdot)b^1(\cdot)=0$ and  $c^2(\cdot)^\top \Pi^1(\cdot)c^2(\cdot)+\bar{l}^1(\cdot)-c^2(\cdot)^\top \Pi^1(\cdot)d^1(\cdot)[\Xi^1(\cdot)]^{-1}d^1(\cdot)^\top \Pi^1(\cdot)\\c^2(\cdot)=0$ overcome the challenges posed by $[x^{\bar{u}^1,u^2}_t]^\ast$, while $c^1(\cdot)^{\top}\Pi^1(\cdot)c^2(\cdot)+\Pi^1(\cdot)a^2(\cdot)=0$ successfully addresses the obstacles presented by $x^{\bar{u}^1,u^2}_t$. Therefore, the state feedback (\ref{u1-feedback}) is deduced. Conversely, the following state feedback is attained:
		\begin{equation*}
			\bar{u}^1_t=\{\cdots\}x^{\bar{u}^1,u^2}_t+\{\cdots\}
			    \mathbb{E}^{\mathcal{F}_t }\big\{[x^{\bar{u}^1,u^2}_t]^{\ast}\big\},
		\end{equation*}
		which is more complex than (\ref{u1-feedback}). Currently, an effective technique to address this issue has not yet been developed as we know. In the future, we will examine this problem with weaker assumptions than \textbf{(A1)} or without \textbf{(A1)}.
	\end{Remark}

	\section{Optimization problem for the leader}
	
	The optimal control problem for the leader will be considered in this section, which is treated as a stochastic optimal control problem. The state equation for this problem is constituted by a SDE with elephant memory and an ABSDE.
	
	At the start, we present the state equation for the leader by substituting (\ref{u1-feedback}) into (\ref{system equation}):
	\begin{equation}\label{leader state}\left\{\begin{aligned}
			dx^{\bar{u}^1,u^2}(t)=&\Big\{\bar{a}^1(t)x^{\bar{u}^1,u^2}(t)+\bar{a}^2(t)x^{\bar{u}^1,u^2}_t+\bar{b}(t)u^2_t+\bar{f}(t)\eta^1(t)+\bar{h}(t)\bar{\eta}^1(t)\Big\}dt\\
			&+\Big\{\bar{c}^1(t)x^{\bar{u}^1,u^2}(t)+\bar{c}^2(t)x^{\bar{u}^1,u^2}_t+\bar{d}(t)u^2_t+\bar{k}(t)\eta^1(t)+\bar{p}(t)\bar{\eta}^1(t)\Big\}dW(t),\\
			d\eta^1(t)=&\bigg\{-\bar{a}^1(t)^\top \eta^1(t)-\bar{c}^1(t)^\top \bar{\eta}^1(t)-\mathbb{E}^{\mathcal{F}_t}\Big[\big(\bar{a}^2(t)^\top \eta^1(t)+\bar{c}^2(t)^\top \bar{\eta}^1(t)\big)^\ast \Big]\\
			&+\mathbb{E}^{\mathcal{F}_t}\Big[\big(\bar{q}^1(t)u^2_t\big)^\ast \Big]+\bar{q}^2(t)u^2_t\bigg\}dt+\bar{\eta}^1(t)dW(t),\ t\in[0,T],\\
			x^{\bar{u}^1,u^2}(0)=&\ x_0,\  \eta^1(t)=\bar{\eta}^1(t)=0,\ t\in[T,2T],
		\end{aligned}\right.\end{equation}
	where
	\begin{equation*}\begin{aligned}
			&\bar{a}^1(t):= a^1(t),\quad\quad \bar{a}^2(t):=a^2(t)-b^1(t)\big[\Xi^1(t)\big]^{-1}d^1(t)^\top \Pi^1(t)c^2(t), \\
			&\bar{b}(t):= b^2(t)-b^1(t)\big[\Xi^1(t)\big]^{-1}d^1(t)^\top \Pi^1(t)d^2(t),\\
			&\bar{c}^1(t):=c^1(t), \quad\quad \bar{c}^2(t):=c^2(t)-d^1(t)\big[\Xi^1(t)\big]^{-1}d^1(t)^\top \Pi^1(t)c^2(t),\\
			&\bar{d}(t):=d^2(t)-d^1(t)\big[\Xi^1(t)\big]^{-1}d^1(t)^\top \Pi^1(t)d^2(t),\\
			&\bar{f}(t):=-b^1(t)\big[\Xi^1(t)\big]^{-1}b^1(t)^\top,\quad\quad \bar{h}(t):=-b^1(t)\big[\Xi^1(t)\big]^{-1}d^1(t)^\top,\\
			&\bar{k}(t):=-d^1(t)\big[\Xi^1(t)\big]^{-1}b^1(t)^\top,\quad\quad \bar{p}(t):=-d^1(t)\big[\Xi^1(t)\big]^{-1}d^1(t)^\top,\\
			&\bar{q}^1(t):=-c^2(t)^\top \Pi^1(t)\big\{I-d^1(t)\big[\Xi^1(t)\big]^{-1}d^1(t)^\top \Pi^1(t)\big\}d^2(t),\\
			&\bar{q}^2(t):=-\Pi^1(t)b^2(t)-c^1(t)^\top \Pi^1(t)d^2(t).
	\end{aligned}\end{equation*}
	
	Now the stochastic optimal control problem is expressed as follows for the leader.
	
	Problem \textbf{(L-EMLQ)}: Seek a control $u^2(\cdot)$ of the leader such that $(\ref{leader state})$ is maintained while the cost functional $J^2(\bar{u}^1(\cdot),u^2(\cdot))$ is minimized over $\mathcal{U}^2[0,T]$.
	
	Next, the adjoint equation is introduced to solve the Problem \textbf{(L-EMLQ)} as follows:
	\begin{equation}\left\{\begin{aligned}\label{p2-equation}
			d\xi(t)=&\bigg\{\bar{a}^1(t)\xi(t)+\bar{a}^2(t)\xi_t+\bar{f}(t)^\top m^2(t)+\bar{k}(t)^\top n^2(t)\bigg\}dt\\
			        &+\bigg\{\bar{c}^1(t)\xi(t)+\bar{c}^2(t)\xi_t+\bar{h}(t)^\top m^2(t)+\bar{p}(t)^\top n^2(t)\bigg\}dW(t),\\
			dm^2(t)=&\bigg\{-\bar{a}^1(t)^\top m^2(t)-\bar{c}^1(t)^\top n^2(t)-l^2(t)\bar{x}(t)-\mathbb{E}^{\mathcal{F}_t}\Big[\big(\bar{a}^2(t)^\top m^2(t)\\
			        &+\bar{c}^2(t)^\top n^2(t)+\bar{l}^2(t)\bar{x}_t\big)^\ast \Big]\bigg\}dt+n^2(t)dW(t),\ t\in[0,T],\\
			\xi(0)=&\ 0,\ m^2(T)=g^2\bar{x}(T),\ m^2(t)=0,\ t\in(T,2T];\ n^2(t)=0,\ t\in[T,2T],
		\end{aligned}\right.\end{equation}
	where $\bar{x}(\cdot)\equiv x^{\bar{u}^1,\bar{u}^2}(\cdot)$. The partially coupled equation (\ref{p2-equation}) is the adjoint equation which is derived from a SDE with elephant memory and an ABSDE. The first and second equation in (\ref{p2-equation}) admit unique solution $\xi(\cdot)\in L_{\mathcal{F}}^2(\Omega;C([0,T];\mathbb{R}^n))$ and $\big(m^2(\cdot),n^2(\cdot)\big)\in L_{\mathcal{F}}^2(\Omega;C([0,T];\mathbb{R}^n))\times L_{\mathcal{F}}^2([0,T];\mathbb{R}^n)$ (see \cite{MXR08,FGWX24}), respectively.
	
	Subsequently, similar to Theorem (\ref{thm3.1}), the following conclusion is obtained.
	\begin{mythm}\label{thm4.1}
		Suppose $\bar{u}^2(\cdot)$ is the optimal control of the leader, the corresponding optimal state trajectory indicated as $\bar{x}(\cdot)$ for the Problem (\textbf{L-EMLQ}), and $\bar{l}^2(t)= r^2(t)=0$ holds true for $t\in [T,2T]$, then the result follows that the optimal control $\bar{u}^2(\cdot)$ satisfies
		\begin{equation}\begin{aligned}\label{u2-optimal}
				\mathbb{E}^{\mathcal{F}_t}\Big[\big(\bar{b}(t)^\top m^2(t)+\bar{d}(t)^\top n^2(t)+ r^2(t)\bar{u}^2_t-\bar{q}^2(t)^\top \xi(t)-\bar{q}^1(t)^\top \xi_t\big)^{\ast}\Big]=0,&\\
                \ a.e.\ t\in[0,T],\ \mathbb{P}\mbox{-}a.s..&
		\end{aligned}\end{equation}
	\end{mythm}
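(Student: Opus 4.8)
The plan is to mimic the convex perturbation argument used in the proof of Theorem \ref{thm3.1}, but now applied to the leader's control problem, whose state dynamics are the partially coupled forward-backward system (\ref{leader state}). First I would fix the optimal pair $(\bar{u}^2(\cdot),\bar{x}(\cdot),\eta^1(\cdot),\bar{\eta}^1(\cdot))$ and, for an arbitrary $v^2(\cdot)\in\mathcal{U}^2[0,T]$, set $u^{2\rho}(\cdot):=\bar{u}^2(\cdot)+\rho(v^2(\cdot)-\bar{u}^2(\cdot))$ for $\rho\in[0,1]$. By linearity of (\ref{leader state}) in $(x^{\bar u^1,u^2},\eta^1,\bar\eta^1,u^2)$, the state triple depends affinely on $\rho$, so I would write the variational (first-order in $\rho$) processes $(y(\cdot),\zeta^1(\cdot),\bar\zeta^1(\cdot))$ solving the homogeneous linearized forward-backward system driven by $v^2(\cdot)-\bar u^2(\cdot)$. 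Expanding $J^2(\bar u^1(\cdot),u^{2\rho}(\cdot))-J^2(\bar u^1(\cdot),\bar u^2(\cdot))$ then yields a term linear in $\rho$ plus an $O(\rho^2)$ quadratic remainder; optimality of $\bar u^2(\cdot)$ forces the $\rho$-linear term to be nonnegative for every $v^2(\cdot)$, hence equal to zero by arbitrariness.

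The core computation is to rewrite that $\rho$-linear term as $2\rho\,\mathbb{E}\int_0^T\langle\,\cdots,v^2(t)-\bar u^2(t)\rangle\,dt$ with the dots equal to the left-hand side of (\ref{u2-optimal}). To do this I would apply It\^o's formula to the two duality pairings $\langle m^2(\cdot),y(\cdot)\rangle$ and $\langle \zeta^1(\cdot),\xi(\cdot)\rangle$ over $[0,T]$, using the dynamics of $\xi$, $m^2$, $n^2$ in (\ref{p2-equation}) and of $y,\zeta^1,\bar\zeta^1$ in the linearized version of (\ref{leader state}). The drift and diffusion coefficients of (\ref{leader state}) were deliberately packaged (the matrices $\bar f,\bar h,\bar k,\bar p$ are symmetric-transpose pairs, and $\bar q^1,\bar q^2$ appear in the $\eta^1$-equation) precisely so that the cross terms coming from the forward part of (\ref{p2-equation}) cancel against those from the backward part, leaving only the boundary contributions $\langle g^2\bar x(T),y(T)\rangle$ and the control-dependent terms. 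After this cancellation the running cost contributions $\langle l^2\bar x,y\rangle$, $\langle\bar l^2\bar x_t,y_t\rangle$ and $\langle r^2\bar u^2_t,v^2_t-\bar u^2_t\rangle$ combine with the adjoint terms to produce exactly $\bar b^\top m^2+\bar d^\top n^2+r^2\bar u^2_t-\bar q^{2\top}\xi-\bar q^{1\top}\xi_t$ paired against $v^2(\cdot)-\bar u^2(\cdot)$.

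The main obstacle is bookkeeping the elephant-memory terms: the pairings involve $x_t$, $u^2_t$, $\xi_t$, $\eta^1_t$, which live in the path space $S^2_{\mathcal{F}}(\Omega;C([0,t];\mathbb{R}^n))$, so every such term must be converted to a pointwise-in-$s$ expression through the dual operator $(\cdot)^\ast$ and the identity (\ref{operator2}) of Lemma \ref{dual operator}, together with Lemma \ref{dual operator two} for the anticipating shift $\kappa^t$. In particular the $\mathbb{E}^{\mathcal{F}_t}[(\cdots)^\ast]$ operators in (\ref{p2-equation}) and in the $\eta^1$-dynamics of (\ref{leader state}) must be handled using the tower property and a Fubini-type interchange (justified by the square-integrability built into the spaces defined in Section 2), so that the adjoint terms involving $\bar a^2{}^\top m^2$, $\bar c^2{}^\top n^2$, $\bar q^1 u^2$ land on the correct side of the inner product. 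Once these dualities are tracked carefully, dividing by $\rho$, sending $\rho\to0$, and invoking the arbitrariness of $v^2(\cdot)$ gives the stationarity condition (\ref{u2-optimal}).
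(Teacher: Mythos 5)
Your proposal is correct and follows essentially the route the paper intends: the paper gives no explicit proof of Theorem \ref{thm4.1}, stating only that it is obtained ``similar to Theorem \ref{thm3.1},'' and your convex-perturbation argument combined with It\^o duality on the pairings with $(m^2,n^2)$ and $\xi$ (taken in the combination $\langle m^2,y\rangle-\langle\zeta^1,\xi\rangle$, matching the pairing $\langle m^2(\cdot),\bar{x}(\cdot)\rangle-\langle\eta^1(\cdot),\xi(\cdot)\rangle$ used later in the proof of Theorem \ref{thm4.2}) and the dual-operator Lemmas \ref{dual operator} and \ref{dual operator two} is exactly that argument. The only caveat is presentational: you should make the sign of the combination of the two pairings explicit, since it is the subtraction that cancels the cross terms $\langle m^2,\bar{f}\zeta^1\rangle$, $\langle n^2,\bar{k}\zeta^1\rangle$, etc., and leaves the $-\bar{q}^{2\top}\xi-\bar{q}^{1\top}\xi_t$ contributions in (\ref{u2-optimal}).
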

	
	 Afterwards, we are going to investigate the feedback expression of $\bar{u}^2_{\cdot}$.
	 In addition to \textbf{(A1)}, several other assumptions must be made regarding the coefficients of (\ref{system equation}) and (\ref{cost}):
	\begin{equation*}
		\textbf{(A2)}\left\{\begin{aligned}
			&c^1(t)^\top \Pi^2(t)d^1(t)+\Pi^2(t)b^1(t)=0,\quad a.e.\ t\in[0,T],\\
			&c^1(t)^\top \Pi^2(t)c^2(t)+\Pi^2(t)a^2(t)=0,\quad a.e.\ t\in[0,T],\\
			&c^1(t)^\top \Pi^2(t)d^2(t)+\Pi^2(t)b^2(t)=0,\quad a.e.\ t\in[0,T],\\
			&\bar{c}^2(t)^\top \Pi^2(t)\bar{c}^2(t)+\bar{l}^2(t)-\bar{c}^2(t)^\top \Pi^2(t)\bar{d}(t)[\Xi^2(t)]^{-1}\bar{d}(t)^\top \Pi^2(t)\bar{c}^2(t)=0,\ a.e.\ t\in[0,T],
		\end{aligned}\right.
	\end{equation*}
	where $\Xi^2(t):=r^2(t)+\bar{d}(t)^\top \Pi^2(t)\bar{d}(t)>0,\ \forall t\in[0,T]$, and $\Pi^2(\cdot)$ is the solution to the Riccati equation as follows:
	\begin{equation}\left\{\begin{aligned}\label{P2-equation}
			\dot{\Pi}^2(t)=&-\Pi^2(t)\bar{a}^1(t)-\bar{a}^1(t)^\top \Pi^2(t)-\bar{c}^1(t)^\top \Pi^2(t)\bar{c}^1(t)-l^2(t),\ t\in[0,T],\\
			      \Pi^2(T)=&\ g^2,\ \Pi^2(t)=0,\ t\in(T,2T].
		\end{aligned}\right.\end{equation}
	
	Select $(\eta^2(\cdot),\bar{\eta}^2(\cdot))$ that satisfy the following ABSDE:
	\begin{equation}\left\{\begin{aligned}\label{eta2-equation}
			d\eta^2(t)=&\bigg\{-\bar{a}^1(t)^\top \eta^2(t)-\bar{c}^1(t)^\top \bar{\eta}^2(t)+\big\{\big[\bar{c}^2(t)^\top \Pi^2(t)\bar{d}(t)\big]^\ast \big[\Xi^{2\ast}(t)\big]^{-1}\big[\bar{b}(t)^\top \big]^\ast\\
			            &-\big[\bar{a}^2(t)^\top \big]^\ast \big\}\mathbb{E}^{\mathcal{F}_t}\big[\eta^{2\ast}(t)\big]+\big\{\big[\bar{c}^2(t)^\top \Pi^2(t)\bar{d}(t)\big]^\ast \big[\Xi^{2\ast}(t)\big]^{-1}\big[\bar{d}(t)^\top \big]^{\ast}\\
			            &-\big[\bar{c}^2(t)^\top \big]^\ast \big\}\mathbb{E}^{\mathcal{F}_t}\big[\bar{\eta}^{2\ast}(t)\big]-\big[\bar{c}^2(t)^\top \Pi^2(t)\bar{d}(t)\big]^\ast \big[\Xi^{2\ast}(t)\big]^{-1}\mathbb{E}^{\mathcal{F}_t}\big\{\big[\bar{q}^1(t)^\top \xi_t\\
			            &+\bar{q}^2(t)^\top \xi(t)\big]^\ast \big\}+\big\{\big[\bar{c}^2(t)^\top \Pi^2(t)\bar{d}(t)\big]^\ast \big[\Xi^{2\ast}(t)\big]^{-1}\big[\bar{d}(t)^\top \Pi^2(t)\bar{k}(t)\big]^\ast\\
			            &-\big[\bar{c}^2(t)^\top \Pi^2(t)\bar{k}(t)\big]^\ast \big\}\mathbb{E}^{\mathcal{F}_t}\big[\eta^{1\ast}(t)\big]+\big\{\big[\bar{c}^2(t)^\top \Pi^2(t)\bar{d}(t)\big]^\ast \big[\Xi^{2\ast}(t)\big]^{-1}\\
			            &\times \big[\bar{d}(t)^\top \Pi^2(t)\bar{p}(t)\big]^\ast-\big[\bar{c}^2(t)^\top \Pi^2(t)\bar{p}(t)\big]^\ast \big\}\mathbb{E}^{\mathcal{F}_t}\big[\bar{\eta}^{1\ast}(t)\big]\bigg\}dt\\
			            &+\bar{\eta}^2(t)dW(t),\ t\in[0,T],\\
			\eta^2(t)=&\ \bar{\eta}^2(t)=0,\ t\in[T,2T].
		\end{aligned}\right.\end{equation}
	
	\begin{Remark}\label{rem4.1}
		Similar to (\ref{P1-equation}), the Riccati equation (\ref{P2-equation}) can be solved combined with the boundedness of the coefficients as it is essentially a linear ODE. In the meanwhile, the ABSDE (\ref{eta2-equation}) admits a unique solution $(\eta^2(\cdot),\bar{\eta}^2(\cdot))\in L_{\mathcal{F}}^2(\Omega;C([0,T];\mathbb{R}^n))\times L_{\mathcal{F}}^2([0,T];\mathbb{R}^n)$(see \cite{FGWX24}).
	\end{Remark}

	It is straightforward to confirm that
	\begin{equation}\left\{\begin{aligned}\label{m2 n2}
			m^2(t)=&\ \Pi^2(t)\bar{x}(t)+\eta^2(t),\ t\in[0,T],\\
			n^2(t)=&\ \Pi^2(t)\bar{c}^1(t)\bar{x}(t)+\Pi^2(t)\bar{c}^2(t)\bar{x}_t+\Pi^2(t)\bar{d}(t)\bar{u}^2_t\\
			       &+\Pi^2(t)\bar{k}(t)\eta^1(t)+\Pi^2(t)\bar{p}(t)\bar{\eta}^1(t)+\bar{\eta}^2(t),\ t\in[0,T].
		\end{aligned}\right.\end{equation}
	Besides, with several calculations, (\ref{u2-optimal}) becomes
	\begin{equation}\begin{aligned}\label{u2-optimal-2}
			\bar{u}^2_t=-\big[\Xi^2(t)\big]^{-1}\bigg\{&\bar{d}(t)^{\top}\Pi^2(t)\bar{c}^2(t)\bar{x}_t+\bar{b}(t)^{\top}\eta^2(t)+\bar{d}(t)^{\top}\bar{\eta}^2(t)\\
			                                               &+\bar{d}(t)^{\top}\Pi^2(t)\bar{k}(t)\eta^1(t)+\bar{d}(t)^{\top}\Pi^2(t)\bar{p}(t)\bar{\eta}^1(t)\\
                                                         &-\bar{q}^1(t)^{\top}\xi_t-\bar{q}^2(t)^{\top}\xi(t)\bigg\},\ a.e.\ t\in[0,T],\ \mathbb{P}\mbox{-}a.s..
	\end{aligned}\end{equation}

	It is not satisfactory to observe the representation of the optimal control (\ref{u2-optimal-2}), as determining $\xi(\cdot)$ necessitates solving the adjoint equation (\ref{p2-equation}), which is difficult to deal with due to the lack of information on $\bar{x}(T)$. It is anticipated that a feedback expression for the optimal control of elephant memory $u^2_{\cdot}$ will be obtained, analogous to (\ref{u1-feedback}). In order to realize this objective, the forward variables and the backward variables obtained in the optimization of the follower and the leader, respectively, will be stacked. The details of this process will be described in the remainder of this section.
	
	Denote
   \begin{equation*}
      \phi(t):= \begin{bmatrix}
				\xi(t)\\
				\bar{x}(t)
			\end{bmatrix},\qquad
          \psi(t):= \begin{bmatrix}\eta^1(t)\\
				\eta^2(t)
			\end{bmatrix},\qquad
          \bar{\psi}(t):= \begin{bmatrix}\bar{\eta}^1(t)\\
				\bar{\eta}^2(t)
			\end{bmatrix},\\
   \end{equation*}
	\begin{eqnarray*}\begin{aligned}
			&A^1(t):= \begin{bmatrix}
				\bar{a}^1(t) & \bar{f}(t)^\top\Pi^2(t)+\bar{k}(t)^\top\Pi^2(t)\bar{c}^1(t)\\
				0 & \bar{a}^1(t)
			\end{bmatrix},\qquad
			A^2(t):= \begin{bmatrix}
				\bar{a}^2(t) & \bar{k}(t)^\top\Pi^2(t)\bar{c}^2(t)\\
				0 & \bar{a}^2(t)
			\end{bmatrix},\\
			&\bar{A}^1(t):= \begin{bmatrix}
				\bar{c}^1(t) & \bar{h}(t)^\top\Pi^2(t)+\bar{p}(t)^\top\Pi^2(t)\bar{c}^1(t)\\
				0 & \bar{c}^1(t)
			\end{bmatrix},\qquad
            \bar{A}^2(t):= \begin{bmatrix}
				\bar{c}^2(t) & \bar{p}(t)^\top\Pi^2(t)\bar{c}^2(t)\\
				0 & \bar{c}^2(t)
			\end{bmatrix},\\
            &B(t):= \begin{bmatrix}
				\bar{k}(t)^\top\Pi^2(t)\bar{k}(t) & \bar{f}(t)^\top\\
				\bar{f}(t) & 0
			\end{bmatrix},\qquad
            C(t):= \begin{bmatrix}
				\bar{k}(t)^\top\Pi^2(t)\bar{p}(t) & \bar{k}(t)^\top\\
				\bar{h}(t) & 0
			\end{bmatrix},\\
			&\bar{C}(t):= \begin{bmatrix}
				\bar{p}(t)^\top\Pi^2(t)\bar{p}(t) & \bar{p}(t)^\top\\
				\bar{p}(t) & 0
			\end{bmatrix},\qquad
            D(t):= \begin{bmatrix}
				\bar{k}(t)^\top\Pi^2(t)\bar{d}(t)\\
				\bar{b}(t)
			\end{bmatrix},\\
			&\bar{D}(t):= \begin{bmatrix}
				\bar{p}(t)^\top\Pi^2(t)\bar{d}(t)\\
				\bar{d}(t)
			\end{bmatrix},\qquad
            G^1(t):= \begin{bmatrix}
				\bar{q}^1(t)\\
				-\bar{c}^2(t)^\top\Pi^2(t)\bar{d}(t)
			\end{bmatrix},\\
			&G^2(t):= \begin{bmatrix}
				\bar{q}^2(t)\\
				0
			\end{bmatrix},\qquad
            H(t):= \begin{bmatrix}
				0 & 0\\
				0 & -\bar{c}^2(t)^\top\Pi^2(t)\bar{d}(t)[\Xi^2(t)]^{-1}\bar{d}(t)^\top\Pi^2(t)\bar{c}^2(t)
			\end{bmatrix},\\
	\end{aligned}\end{eqnarray*}
	then we gain
	\begin{equation}\left\{\begin{aligned}\label{total-state-equation}
			d\phi(t)=&\bigg\{A^1(t)\phi(t)+A^2(t)\phi_t+B(t)\psi(t)+C(t)\bar{\psi}(t)+D(t)\bar{u}^2_t\bigg\}dt\\
			         &+\bigg\{\bar{A}^1(t)\phi(t)+\bar{A}^2(t)\phi_t+C(t)^\top\psi(t)+\bar{C}(t)\bar{\psi}(t)+\bar{D}(t)\bar{u}^2_t\bigg\}dW(t),\\
			d\psi(t)=&\bigg\{\mathbb{E}^{\mathcal{F}_t}\big\{\big[H(t)\phi_t\big]^\ast\big\}-A^1(t)^\top\psi(t)
-\mathbb{E}^{\mathcal{F}_t}\big\{\big[A^2(t)^\top\psi(t)\big]^\ast\big\}-\bar{A}^1(t)^\top\bar{\psi}(t)\\
			         &-\mathbb{E}^{\mathcal{F}_t}\big\{\big[\bar{A}^2(t)^\top\bar{\psi}(t)\big]^\ast\big\}
+\mathbb{E}^{\mathcal{F}_t}\big\{\big[G^1(t)\bar{u}^2_t\big]^\ast\big\}+G^2(t)\bar{u}^2_t\bigg\}dt\\
			         &+\bar{\psi}(t)dW(t),\ t\in[0,T],\\
			 \phi(0)=&\ (0,x_0^\top)^\top,\ \psi(t)=\bar{\psi}(t)=(0,0)^\top,\ t\in[T,2T],
		\end{aligned}\right.\end{equation}
	and (\ref{u2-optimal-2}) can be simplified as
	\begin{equation}\begin{aligned}\label{u2-optimal-3}
			\bar{u}^2_t=-\big[\Xi^2(t)\big]^{-1}\Big\{-G^1(t)^\top\phi_t+D(t)^\top\psi(t)+\bar{D}(t)^\top\bar{\psi}(t)-G^2(t)^\top\phi(t)\Big\},&\\
			             \quad a.e.\ t\in[0,T],\ \mathbb{P}\mbox{-}a.s..&
	\end{aligned}\end{equation}
	
	Establishing a nonhomogeneous relationship between $\phi(\cdot)$ and $\psi(\cdot)$ is crucial for state feedback to the optimal control of elephant memory $\bar{u}^2_{\cdot}$. Next, based on \textbf{(A1)}, \textbf{(A2)}, we also impose additional conditions on the coefficients of (\ref{system equation}) and (\ref{cost}):
	\begin{equation*}
		\textbf{(A3)}\left\{\begin{aligned}
			&I-d^1(t)\big[\Xi^1(t)\big]^{-1}d^1(t)^\top\Pi^1(t)=0,\quad a.e.\ t\in[0,T],\\
			&a^2(t)-b^1(t)\big[\Xi^1(t)\big]^{-1}d^1(t)^\top\Pi^1(t)c^2(t)=0,\quad a.e.\ t\in[0,T].
		\end{aligned}\right.
	\end{equation*}	
	Hence
	\begin{equation*}\left\{\begin{aligned}
			&\bar{c}^2(t)\equiv\bar{d}(t)\equiv\bar{q}^1(t)\equiv0,\quad t\in[0,T],\\
			&\bar{A^2}(t) \equiv \bar{D}(t)\equiv G^1(t) \equiv   H(t) \equiv0,\quad t\in[0,T],\\
			&A^2(t)=\begin{bmatrix}
				\bar{a}^2(t) & 0\\
				0 & \bar{a}^2(t)
			\end{bmatrix},\quad \bar{D}(t)=\begin{bmatrix}
				0\\
				\bar{d}^2(t)
			\end{bmatrix},\quad t\in[0,T].
		\end{aligned}\right.\end{equation*}
	
	Denote
	\begin{equation*}\begin{aligned}
			\Xi^3(t):=&\ r^2(t),\\
			\Omega^1(t):=&\ B(t)\Gamma(t)+C(t)\big[I-\Gamma(t)\bar{C}(t)\big]^{-1}\Gamma(t)\big[\bar{A}^1(t)+C(t)^\top\Gamma(t)\big]\\
			&+D(t)\big[\Xi^3(t)\big]^{-1}\big[G^2(t)^{\top}-D(t)^\top\Gamma(t)\big],\\
			\Omega^2(t):=& -B(t)-C(t)\big[I-\Gamma(t)\bar{C}(t)\big]^{-1}\Gamma(t)C(t)^{\top}+D(t)\big[\Xi^3(t)\big]^{-1}D(t)^{\top},\\
			\Omega^3(t):=&\ \big[G^2(t)-\Gamma(t)D(t)\big]\big[\Xi^3(t)\big]^{-1}\big[G^2(t)^{\top}-D(t)^{\top}\Gamma(t)\big]-\big[\bar{A}^1(t)+C(t)^{\top}\Gamma(t)\big]^{\top}\\
			&\times\big[I-\Gamma(t)\bar{C}(t)\big]^{-1}\Gamma(t)\big[\bar{A}^1(t)+C(t)^{\top}\Gamma(t)\big]-\Gamma(t)B(t)\Gamma(t),
	\end{aligned}\end{equation*}
	where $\Gamma(\cdot)$ satisfies the matrix-valued equation as follows:
	\begin{equation}\left\{\begin{aligned}\label{L-equation}
			\dot{\Gamma}(t)&=-\Gamma(t)A^1(t)-A^1(t)^\top\Gamma(t)+\Lambda(t,t+\theta)I_{[0,T]}(t+\theta),\ \theta\in[0,t], \ t\in[0,T],\\
			\Gamma(T)&=0,
		\end{aligned}\right.\end{equation}
	and $\Lambda(\cdot,\cdot+\theta)$ is the solution to the following equation:
	\begin{equation}\left\{\begin{aligned}\label{Pi-equation}
			-\frac{\partial\Lambda(t,s)}{\partial t}=&\ \Lambda(t,s)A^1(t)+A^1(t)^\top\Lambda(t,s),\ s-\theta \leq t\leq s,\\
			\Lambda(t,t)=&\left(\int_t^{(t+\theta)\wedge T}\Lambda(t,s)ds\right)\Omega^1(t)+\Omega^1(t)^\top\int_t^{(t+\theta)\wedge T}\Lambda(t,s)ds\\
			&+\left(\int_t^{(t+\theta)\wedge T}\Lambda(t,s)ds\right)\Omega^2(t)\int_t^{(t+\theta)\wedge T}\Lambda(t,s)ds+\Omega^3(t).
		\end{aligned}\right.\end{equation}
	Obviously $\Lambda(\cdot,\cdot+\theta)$ is symmetric, hence so is $\Gamma(\cdot)$.
	
	\begin{Remark}\label{rem4.3}
	The solvability of these matrix-valued equations is rather challenging. We will assume that their solvability holds true in this paper.
	\end{Remark}
	
	Furthermore, we develop the nonhomogeneous relationship between $\psi(\cdot)$ and $\phi(\cdot)$, as illustrated below.
	
	\begin{mylem}\label{lem4.1}
		Assume that there are unique solutions to both the matrix equation $(\ref{L-equation})$ and $(\ref{Pi-equation})$, $[I-\Gamma(\cdot)\bar{C}(\cdot)]^{-1}$ exits, then it holds that
		\begin{equation}\label{relationship}
			\psi(t)=\Gamma(t)\phi(t)-\int_t^{(t+\theta)\wedge T}\Lambda(t,s)\mathbb{E}^{\mathcal{F}_t}[\phi(t)]ds,\ \theta\in[0,t],\ t\in[0,T].
		\end{equation}
	\end{mylem}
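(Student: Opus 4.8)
The plan is to verify the ansatz (\ref{relationship}) directly: treating $\Gamma(\cdot)$ and $\Lambda(\cdot,\cdot)$ as \emph{a priori} undetermined, I would apply It\^o's formula to the right-hand side of (\ref{relationship}), compare the resulting backward dynamics with the $\psi$-equation in (\ref{total-state-equation}), and show that the matching of the $dt$- and $dW(t)$-terms forces $\Gamma$ and $\Lambda$ to satisfy precisely (\ref{L-equation}) and (\ref{Pi-equation}); since by the standing assumption these equations are solvable, the ansatz is consistent, and uniqueness of the solution to the coupled forward–backward system (Theorem 3.4 of \cite{FGWX24}) then yields (\ref{relationship}).

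First I would close the loop. Substituting the ansatz for $\psi(t)$ (and the analogous expression for $\bar\psi(t)$ obtained below) into the feedback (\ref{u2-optimal-3}) eliminates $\psi,\bar\psi$ and expresses $\bar u^2_t$ as an explicit affine functional of $\phi_t$ and $\phi(t)$; with $\Xi^3(t)=r^2(t)$ and the block collapses forced by \textbf{(A3)} (namely $\bar c^2\equiv\bar d\equiv\bar q^1\equiv 0$ and $\bar A^2\equiv\bar D\equiv G^1\equiv H\equiv 0$), this is where the factor $[\Xi^3(t)]^{-1}[G^2(t)^\top-D(t)^\top\Gamma(t)]$ in $\Omega^1(t)$ comes from. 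Inserting this feedback into the $\phi$-equation of (\ref{total-state-equation}) turns it into a closed forward SDE with memory for $\phi$ alone, whose drift and diffusion coefficients are then available for the It\^o computation.

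Next I would differentiate the right-hand side of (\ref{relationship}). The term $d(\Gamma(t)\phi(t))$ is handled by the product rule using $\dot\Gamma$ from (\ref{L-equation}) and the closed $\phi$-dynamics. The integral term is differentiated by the Leibniz rule: the lower-limit contribution produces $-\Lambda(t,t)\phi(t)\,dt$, which is exactly why the boundary condition $\Lambda(t,t)=\cdots$ in (\ref{Pi-equation}) is imposed; the $t$-dependence of the integrand through $\Lambda(t,s)$ contributes $-\int_t^{(t+\theta)\wedge T}\tfrac{\partial\Lambda}{\partial t}(t,s)\,ds$, which is rewritten by (\ref{Pi-equation}); and the $t$-dependence through $\phi(t)$ contributes the $\phi$-drift and $\phi$-diffusion integrated against $\Lambda(t,\cdot)$, with the truncation at $T$ recorded by the indicator $I_{[0,T]}(t+\theta)$ appearing in (\ref{L-equation}). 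Collecting the $dW(t)$-terms and equating them with $\bar\psi(t)\,dW(t)$ from the left side gives a linear equation for $\bar\psi(t)$ whose solvability uses the hypothesis that $[I-\Gamma(\cdot)\bar C(\cdot)]^{-1}$ exists; this produces $\bar\psi(t)$ as an affine function of $\phi(t)$, $\psi(t)$ and $\phi_t$, which via the ansatz becomes a function of $\phi$ alone (and feeds back into the feedback computation of the previous step). Substituting this $\bar\psi$ and the closed $\phi$-dynamics back into the $dt$-terms and regrouping the matrix blocks, the identity splits into one relation in $\Gamma$, $A^1$, $\Omega^1,\Omega^2,\Omega^3$, which is (\ref{L-equation}), and one relation in $\Lambda$, which is (\ref{Pi-equation}). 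Since $\Gamma,\Lambda$ solve these, the process $\widetilde\psi(t):=\Gamma(t)\phi(t)-\int_t^{(t+\theta)\wedge T}\Lambda(t,s)\,\mathbb{E}^{\mathcal F_t}[\phi(t)]\,ds$ together with the $\bar\psi$ so constructed solves the same ABSDE as $(\psi,\bar\psi)$ with the same terminal condition on $[T,2T]$; uniqueness gives $\widetilde\psi\equiv\psi$.

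The main obstacle is the bookkeeping in the drift-matching step: one must simultaneously track the many coupled blocks ($A^1,A^2,B,C,\bar C,D,G^1,G^2,H$), the anticipated dual terms $\mathbb{E}^{\mathcal F_t}\{[\,\cdot\,]^\ast\}$, the memory term $\phi_t$, and the Leibniz/indicator bookkeeping associated with the $(t+\theta)\wedge T$ truncation, and arrange that the boundary condition $\Lambda(t,t)=\cdots$ and the indicator term in (\ref{L-equation}) emerge exactly. The simplifications forced by \textbf{(A3)} are what make this tractable; without them the $\bar\psi$-equation would not decouple cleanly and the relation (\ref{relationship}) would acquire an additional $\mathbb{E}^{\mathcal F_t}$-term, as already flagged in Remark \ref{rem3.4}.
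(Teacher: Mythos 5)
Your proposal is correct and follows essentially the same route as the paper: the paper sets $\Upsilon(t):=\psi(t)-\Gamma(t)\phi(t)$, applies It\^o's formula, uses the invertibility of $I-\Gamma(t)\bar{C}(t)$ to solve for $\bar{\psi}(t)$, substitutes the feedback (\ref{u2-optimal-3}), and then verifies that the candidate $-\int_t^{(t+\theta)\wedge T}\Lambda(t,s)\mathbb{E}^{\mathcal{F}_t}[\phi(t)]ds$ (paired with a zero diffusion component) solves the resulting BSDE, concluding by uniqueness — exactly the drift/diffusion matching and uniqueness argument you describe. The only cosmetic differences are that the paper works with the difference $\Upsilon$ rather than the full ansatz and carries out the verification in two stages, first on $[T/2,T]$ with terminal value $\Upsilon(T)=0$ and then on $[0,T/2]$, to handle the $(t+\theta)\wedge T$ truncation that you account for via the indicator $I_{[0,T]}(t+\theta)$.
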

	\begin{proof}
		Define
		\begin{equation}\label{relationship--}
			\Upsilon(t)\triangleq \psi(t)-\Gamma(t)\phi(t),\quad t\in[0,T],
		\end{equation}
		and suppose $d\Upsilon(t)=\Upsilon^1(t)dt+\Upsilon^2(t)dW(t)$.
		The result is as follows which is obtained by applying It\^o's formula to (\ref{relationship--}):
		\begin{equation*}\begin{aligned}
				d\Upsilon(t)=&\Big\{-\big[A^1(t)^\top+\Gamma(t)B(t)\big]\psi(t)-\big[\bar{A}^1(t)^\top+\Gamma(t)C(t)\big]\bar{\psi}(t)\\
				&\ +\big[G^2(t)-\Gamma(t)D(t)\big]\bar{u}^2_t+\big[A^1(t)^\top\Gamma(t)-\Lambda(t,t+\theta)I_{[0,T]}(t+\theta)\big]\phi(t)\Big\}dt\\
				&+\Big\{\bar{\psi}(t)-\Gamma(t)\bar{A}^1(t)\phi(t)-\Gamma(t)C(t)^\top\psi(t)-\Gamma(t)\bar{C}(t)\bar{\psi}(t)\Big\}dW(t),\\
                &\hspace{6cm}  \theta\in[0,t], \ t\in[0,T],	
		\end{aligned}\end{equation*}
		then we gain
		\begin{equation*}
				\bar{\psi}(t)=\Upsilon^2(t)+\Gamma(t)\bar{A}^1(t)\phi(t)+\Gamma(t)C(t)^\top\psi(t)+\Gamma(t)\bar{C}(t)\bar{\psi}(t),\quad t\in[0,T].
		\end{equation*}
		As a result, we deduce
		\begin{equation}\begin{aligned}\label{relationship---}
				\bar{\psi}(t)=&\big[I-\Gamma(t)\bar{C}(t)\big]^{-1}\Big\{\Upsilon^2(t)+\Gamma(t)\bar{A}^1(t)\phi(t)+\Gamma(t)C(t)^\top\psi(t)\Big\},\quad t\in[0,T].
		\end{aligned}\end{equation}
		Moreover, substituting (\ref{relationship--}) into (\ref{u2-optimal-3}) yields
		\begin{equation}\begin{aligned}\label{u2---}
				\bar{u}^2_t=&-\big[\Xi^3(t)\big]^{-1}\Big\{\big[D(t)^{\top}\Gamma(t)-G^2(t)^{\top}\big]\phi(t)+D(t)^{\top}\Upsilon(t)\Big\},\ a.e.\ t\in[0,T],\ \mathbb{P}\mbox{-}a.s..
		\end{aligned}\end{equation}
		Therefore we derive
		\begin{equation}\begin{aligned}\label{Gamma1}
				\Upsilon^1(t)=&-\Gamma(t)B(t)\Gamma(t)\phi(t)-\Lambda(t,t+\theta)I_{[0,T]}(t+\theta)\phi(t)+\big[\bar{A}^1(t)+\Gamma(t)C(t)\big]\bar{\psi}(t)\\
             &+\big[G^2(t)-\Gamma(t)D(t)\big]\bar{u}^2_t-\big[A^1(t)^{\top}+\Gamma(t)B(t)\big]\Upsilon(t),\ \theta\in[0,t], \ t\in[0,T].
		\end{aligned}\end{equation}
		Taking (\ref{relationship---}) and (\ref{u2---}) into (\ref{Gamma1}) gives rise to the following conclusion:
		\begin{equation}\begin{aligned}\label{Gamma--}
				\Upsilon^1(t)=&\Big\{ \big[G^2(t)-\Gamma(t)D(t)\big]\big[\Xi^3(t)\big]^{-1}\big[G^2(t)^{\top}-D(t)^{\top}\Gamma(t)\big]-\Gamma(t)B(t)\Gamma(t)\\
             &-\big[\bar{A}^1(t)^{\top}+\Gamma(t)C(t)\big]\big[I-\Gamma(t)\bar{C}(t)\big]^{-1}\Gamma(t)\big[\bar{A}^1(t)+C(t)^{\top}\Gamma(t)\big]\Big\}\phi(t)\\
				&-\Lambda(t,t+\theta)I_{[0,T]}(t+\theta)\phi(t)+\Big\{-A^1(t)^{\top}-\Gamma(t)B(t)-\big[G^2(t)-\Gamma(t)D(t)\big]\\
				&\times \big[\Xi^3(t)\big]^{-1}D(t)^{\top}-\big[\bar{A}^1(t)^{\top}+\Gamma(t)C(t)\big]\big[I-\Gamma(t)\bar{C}(t)\big]^{-1}\Gamma(t)C(t)^{\top}\Big\}\Upsilon(t)\\
				&-\big[\bar{A}^1(t)^{\top}+\Gamma(t)C(t)\big]\big[I-\Gamma(t)\bar{C}(t)\big]^{-1}\Upsilon^2(t),\ \theta\in[0,t], \ t\in[0,T].
		\end{aligned}\end{equation}
		In the meanwhile, let $\tilde{\Upsilon}^1(t)\equiv-\int_t^{T}\Lambda(t,s)\mathbb{E}^{\mathcal{F}_{t}}[\phi(t)]ds$, $t\in[T/2,T]$, we assert that $(\tilde{\Upsilon}^1(\cdot),0)$ is the solution to BSDE
		\begin{equation*}\left\{\begin{aligned}
				&d\Upsilon(t)=\Upsilon^1(t)dt+\Upsilon^2(t)dW(t),\ t\in[T/2,T],\\
				&\Upsilon(T)=0.
			\end{aligned}\right.\end{equation*}
		Thus
		\begin{equation*}
			\Upsilon(T/2)=\tilde{\Upsilon}^1(T/2)=-\int_{T/2}^T\Lambda(T/2,s)\mathbb{E}^{\mathcal{F}_{T/2}}[\phi(T/2)]ds.
		\end{equation*}
		When $t\in[0,T/2]$, designate $\tilde{\Upsilon}^2(t)=-\int_t^{t+\theta}\Lambda(t,s)\mathbb{E}^{\mathcal{F}_{t}}[\phi(t)]ds$, $\theta\in[0,t]$, the aim of the subsequent steps is to verify that $(\tilde{\Upsilon}^2(\cdot),0)$ is the solution to BSDE
		\begin{equation}\left\{\begin{aligned}\label{Gamma}
				&d\Upsilon(t)=\Upsilon^1(t)dt+\Upsilon^2(t)dW(t),\ t\in[0,T/2],\\
				&\Upsilon(T/2)=-\int_{T/2}^T\Lambda(T/2,s)\mathbb{E}^{\mathcal{F}_{T/2}}[\phi(T/2)]ds.
			\end{aligned}\right.\end{equation}
		Differentiating on $\tilde{\Upsilon}^2(t)$ with respect to $t$, we gain
		\begin{equation}\begin{aligned}\label{Gamma2}
				d\tilde{\Upsilon}^2(t)=&-\Lambda(t,t+\theta)\mathbb{E}^{\mathcal{F}_t}[\phi(t)]+\Lambda(t,t)\mathbb{E}^{\mathcal{F}_t}[\phi(t)]
-\int_t^{t+\theta}\dot{\Lambda}(t,s)\mathbb{E}^{\mathcal{F}_t}[\phi(t)]ds\\
				&-\int_t^{t+\theta}\Lambda(t,s)\Big\{A^1(t)\mathbb{E}^{\mathcal{F}_t}[\phi(t)]+B(t)\mathbb{E}^{\mathcal{F}_t}[\psi(t)]\\
                &\qquad +C(t)\mathbb{E}^{\mathcal{F}_t}[\bar{\psi}(t)]+D(t)\mathbb{E}^{\mathcal{F}_t}[u^2_t]\Big\}ds,\ \theta\in[0,t],\ t\in[0,T/2].
		\end{aligned}\end{equation}
		Noting
		\begin{equation}\label{Gamma2 tilde}
				\mathbb{E}^{\mathcal{F}_{t}}[\tilde{\Upsilon}^2(t)]=-\int_t^{t+\theta}\Lambda(t,s)ds\mathbb{E}^{\mathcal{F}_t}[\phi(t)],\ \theta\in[0,t].
		\end{equation}
		Remind (\ref{relationship---}) and (\ref{u2---}), we get
		\begin{equation}\begin{aligned}\label{eq4.23}
				&B(t)\mathbb{E}^{\mathcal{F}_{t}}[\psi(t)]+C(t)\mathbb{E}^{\mathcal{F}_t}[\bar{\psi}(t)]+D(t)\mathbb{E}^{\mathcal{F}_t}[u^2_t]\\
				=&\ B(t)\Gamma(t)\mathbb{E}^{\mathcal{F}_t}[\phi(t)]+B(t)\mathbb{E}^{\mathcal{F}_t}[\Upsilon(t)]+C(t)\big[I-\Gamma(t)\bar{C}(t)\big]^{-1}\\
                &\times\bigg\{\mathbb{E}^{\mathcal{F}_t}[\Upsilon^2(t)]+\Gamma(t)\bar{A}^1(t)\mathbb{E}^{\mathcal{F}_t}[\phi(t)]
                +\Gamma(t)C(t)^\top\mathbb{E}^{\mathcal{F}_t}[\Upsilon(t)]\bigg\}\\
				&-D(t)\big[\Xi^3(t)\big]^{-1}\bigg\{\big[D(t)^{\top}\Gamma(t)
				-G^2(t)^{\top}\big]\mathbb{E}^{\mathcal{F}_t}[\phi(t)]+D(t)^\top\mathbb{E}^{\mathcal{F}_t}[\Upsilon(t)]\bigg\},\ t\in[0,T].
		\end{aligned}\end{equation}
		By substituting (\ref{Pi-equation}), (\ref{Gamma2 tilde}) and (\ref{eq4.23}) into (\ref{Gamma2}), along with (\ref{Gamma--}), we can clearly demonstrate that $(\tilde{\Upsilon}^2(\cdot),0)$ is the solution to BSDE (\ref{Gamma}). Thus we conclude the proof.
	\end{proof}

	Now the necessary conditions of Problem \textbf{(L-EMLQ)} for solvability are derived.
	\begin{mythm}\label{thm4.2}
		Assuming that \textbf{(A1)-(A3)} hold, let $\bar{l}^i(t)=r^i(t)=0$ for $t\in [T,2T]$, $i=1,\ 2$. Suppose $\Gamma(\cdot)$ as well as $\Lambda(\cdot,\cdot+\theta)$ is the unique solution of $(\ref{L-equation})$ and $(\ref{Pi-equation})$, respectively. Assume $\bar{u}^2(\cdot)$ is the optimal control of the leader, the associated state trajectory denoted as $\bar{x}(\cdot)$ for the Problem (\textbf{L-EMLQ}). It is concluded that the optimal control of elephant memory $\bar{u}^2_{\cdot}$ takes the following state feedback form:
		\begin{equation}\begin{aligned}\label{u2-feedback}
		\bar{u}^2_t=-\big[\Xi^3(t)\big]^{-1}\bigg\{D(t)^\top\Gamma(t)-G^2(t)^\top-D(t)^\top\int_t^{(t+\theta)\wedge T}\Lambda(t,s)ds\bigg\}\mathbb{E}^{\mathcal{F}_t}[\phi(t)],&\\
a.e.\ \theta\in[0,t], \ t\in[0,T],\ \mathbb{P}\mbox{-}a.s..&
		\end{aligned}\end{equation}
		In addition, the optimal cost can be described as follows:
		\begin{equation}\begin{aligned}\label{leaderer-optimal-cost}
		\underset{u^2(\cdot)\in\,\mathcal{U}^2[0,T]}{\inf}J^2(\bar{u}^1(\cdot),u^2(\cdot))=J^2(\bar{u}^1(\cdot),\bar{u}^2(\cdot))=\mathbb{E}\big\langle x_0,\Pi^2(0)x_0+\eta^2(0)\big\rangle.
		\end{aligned}\end{equation}
	\end{mythm}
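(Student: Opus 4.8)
The plan is to read off the feedback law (\ref{u2-feedback}) from the optimality condition of Theorem \ref{thm4.1} combined with the nonhomogeneous relation of Lemma \ref{lem4.1}, and then to evaluate $J^2(\bar{u}^1(\cdot),\bar{u}^2(\cdot))$ by an It\^o's formula computation on the duality pairing of the coupled forward-backward system, in the spirit of the cost computation in Theorem \ref{thm3.2}.

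First, for the feedback form. Since $\bar{u}^2(\cdot)$ is optimal, (\ref{u2-optimal}) holds; substituting the representation (\ref{m2 n2}) of $(m^2(\cdot),n^2(\cdot))$ and invoking \textbf{(A1)}--\textbf{(A3)} is precisely the chain of manipulations yielding (\ref{u2-optimal-2}), i.e.\ (\ref{u2-optimal-3}) in the stacked variables $\phi,\psi,\bar{\psi}$. Under \textbf{(A3)} one has $\bar{c}^2\equiv\bar{d}\equiv\bar{q}^1\equiv0$, hence $\bar{D}\equiv G^1\equiv0$ and $\Xi^2\equiv\Xi^3$, so (\ref{u2-optimal-3}) collapses to $\bar{u}^2_t=-[\Xi^3(t)]^{-1}\big\{D(t)^\top\psi(t)-G^2(t)^\top\phi(t)\big\}$. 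It then remains to insert (\ref{relationship}) from Lemma \ref{lem4.1}: since $\phi(t)$ is $\mathcal{F}_t$-measurable, $\mathbb{E}^{\mathcal{F}_t}[\phi(t)]=\phi(t)$, so $\psi(t)=\big(\Gamma(t)-\int_t^{(t+\theta)\wedge T}\Lambda(t,s)\,ds\big)\phi(t)$, which turns the previous identity into (\ref{u2-feedback}). Admissibility $\bar{u}^2(\cdot)\in\mathcal{U}^2[0,T]$ follows because feeding (\ref{u2-feedback}) into (\ref{total-state-equation}) produces an SDE coupled with an ABSDE whose coefficients are bounded (using the standing solvability assumption on $\Gamma,\Lambda$ from Remark \ref{rem4.3}), hence uniquely solvable by \cite{MXR08,FGWX24}.

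Next, for the optimal cost, I would apply It\^o's formula to $t\mapsto\langle m^2(t),\bar{x}(t)\rangle-\langle\eta^1(t),\xi(t)\rangle$ on $[0,T]$ (equivalently, by (\ref{m2 n2}), to $\langle\Pi^2(t)\bar{x}(t),\bar{x}(t)\rangle+\langle\eta^2(t),\bar{x}(t)\rangle-\langle\eta^1(t),\xi(t)\rangle$), take expectation so the stochastic integrals vanish, and expand the drifts using the state equation (\ref{leader state}) and the adjoint equation (\ref{p2-equation}), converting every starred integrand $\langle\mathbb{E}^{\mathcal{F}_t}[(\cdot)_t^\ast],\cdot\rangle$ into $\langle(\cdot)_t,\cdot_t\rangle$ via Lemmas \ref{dual operator}--\ref{dual operator two}. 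I expect the bookkeeping to go as follows: the $\bar{a}^1,\bar{c}^1,\bar{a}^2,\bar{c}^2$ terms cancel pairwise; the terms $-\langle l^2\bar{x},\bar{x}\rangle$ and $-\langle\bar{l}^2\bar{x}_t,\bar{x}_t\rangle$ reproduce (up to sign) the quadratic part of the running cost in $J^2$; the cross terms $\langle\bar{f}(t)^\top m^2(t)+\bar{k}(t)^\top n^2(t),\eta^1(t)\rangle$ and $\langle\bar{h}(t)^\top m^2(t)+\bar{p}(t)^\top n^2(t),\bar{\eta}^1(t)\rangle$ produced by the $\eta^1,\bar{\eta}^1$ inputs in (\ref{leader state}) cancel against the corresponding driving terms in the $\xi$-dynamics of (\ref{p2-equation}); and $\langle\bar{b}(t)^\top m^2(t)+\bar{d}(t)^\top n^2(t),\bar{u}^2_t\rangle$ is replaced, by the optimality condition (\ref{u2-optimal}), with $-\langle r^2(t)\bar{u}^2_t,\bar{u}^2_t\rangle+\langle\bar{q}^2(t)^\top\xi(t)+\bar{q}^1(t)^\top\xi_t,\bar{u}^2_t\rangle$, whose surplus $\bar{q}^1,\bar{q}^2$ part is in turn cancelled by the $\bar{q}^1\bar{u}^2_t,\bar{q}^2\bar{u}^2_t$ inputs in the $\eta^1$-equation of (\ref{leader state}). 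After all cancellations only $-\mathbb{E}\int_0^T\big(\langle l^2\bar{x},\bar{x}\rangle+\langle\bar{l}^2\bar{x}_t,\bar{x}_t\rangle+\langle r^2\bar{u}^2_t,\bar{u}^2_t\rangle\big)dt$ remains on the right; on the left one uses the boundary data $\langle m^2(T),\bar{x}(T)\rangle=\langle g^2\bar{x}(T),\bar{x}(T)\rangle$, $\eta^1(T)=0$, $\xi(0)=0$ and $\langle m^2(0),\bar{x}(0)\rangle=\langle\Pi^2(0)x_0+\eta^2(0),x_0\rangle$ (from (\ref{m2 n2})). Rearranging and using the symmetry of $\Pi^2(\cdot)$ then gives $J^2(\bar{u}^1(\cdot),\bar{u}^2(\cdot))=\mathbb{E}\langle x_0,\Pi^2(0)x_0+\eta^2(0)\rangle$, which is (\ref{leaderer-optimal-cost}).

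The main obstacle is the bookkeeping in the cost computation: tracking the many cross terms generated by the elephant-memory operators and by the two-way coupling of the forward block $(\xi,\bar{x})$ with the backward blocks $(\eta^1,\eta^2)$, $(\bar{\eta}^1,\bar{\eta}^2)$, and verifying that the adjoint system (\ref{p2-equation}), the ABSDE (\ref{eta2-equation}) and the relation (\ref{m2 n2}) are arranged exactly so that everything but the running cost cancels, with no residual integral remainder. A related subtlety is the disciplined application of the dual-operator identities (\ref{operator2})--(\ref{relation}) whenever a starred process is paired under $\mathbb{E}\int_0^T(\cdot)\,dt$, since the naive ``integration by parts on $\langle\cdot,x_t\rangle$'' is not available and must be replaced by these identities throughout.
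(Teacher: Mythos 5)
Your proposal is correct and follows essentially the same route as the paper: the feedback form is obtained by combining the optimality condition (\ref{u2-optimal})/(\ref{u2-optimal-3}) with the relation $\psi=\Gamma\phi+\Upsilon$ and Lemma \ref{lem4.1} (the paper cites (\ref{relationship}), (\ref{relationship--}), (\ref{u2---}) for exactly this), and the cost identity is derived by applying It\^o's formula to the same pairing $\langle m^2(\cdot),\bar{x}(\cdot)\rangle-\langle\eta^1(\cdot),\xi(\cdot)\rangle$, invoking Lemmas \ref{dual operator}--\ref{dual operator two} for the starred terms, and killing the residual integral via (\ref{u2-optimal}). Your extra observations (that $\bar{d}\equiv\bar{q}^1\equiv0$ and $\Xi^2\equiv\Xi^3$ under \textbf{(A3)}, and the admissibility check) are consistent with, and slightly more explicit than, the paper's brief argument.
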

	\begin{proof}
		It is directly to check that the feedback (\ref{u2-feedback}) results from (\ref{relationship}), (\ref{relationship--}) and (\ref{u2---}). So all that proves to be shown is (\ref{leaderer-optimal-cost}). Memorize (\ref{leader state}) and (\ref{p2-equation}), applying It\^o's formula to $\langle m^2(\cdot),\bar{x}(\cdot)\rangle-\langle\eta^1(\cdot),\xi(\cdot)\rangle$, with the help of Lemmas \ref{dual operator}, \ref{dual operator two} and a series of calculations, we obtain
		\begin{equation}\begin{aligned}\label{eq4.26}
				&J^2(\bar{u}^1(\cdot),\bar{u}^2(\cdot))\\
			  =&\ \mathbb{E}\big\langle m^2(0), x_0 \big\rangle+\mathbb{E}\int_0^T\Big\langle\bar{u}^2(t),\mathbb{E}^{\mathcal{F}_t}\Big\{\big[ r^2(t)\bar{u}^2_t+\bar{b}(t)^\top m^2(t)-\bar{q}^2(t)^\top\xi(t)\big]^{\ast}\Big\}\Big\rangle dt.
		\end{aligned}\end{equation}
		Since $(\ref{u2-optimal})$ holds, we deduce
		\begin{equation}\begin{aligned}\label{eq4.27}
				&J^2(\bar{u}^1(\cdot),\bar{u}^2(\cdot))=\mathbb{E}\big\langle m^2(0),x_0\big\rangle=\mathbb{E}\big\langle x_0,\Pi^2(0)x_0+\eta^2(0)\big\rangle,
		\end{aligned}\end{equation}
		which completes the proof.
	\end{proof}
	
	Finally, we summarize the findings related to our stochastic linear-quadratic Stackelberg differential game with elephant memory and present the main theorem.
	
	\begin{mythm}\label{thm4.3}
		Let \textbf{(A1)-(A3)} hold, assume $\bar{l}^i(t)=r^i(t)=0$ for $t\in [T,2T]$, $i=1,2$. Suppose $\Gamma(\cdot)$ and $\Lambda(\cdot,\cdot+\theta)$ are the unique solutions of $(\ref{L-equation})$ and $(\ref{Pi-equation})$, respectively, then the optimal open-loop Stackleberg strategy $(\bar{u}^1(\cdot),\bar{u}^2(\cdot))$ is given by
		\begin{equation}\begin{aligned}\label{strategy-1}
				\bar{u}^1_t&=L_1^{\bar{u}^1}(t)\phi_t+L_2^{\bar{u}^1}(t)\phi(t)+L_3^{\bar{u}^1}(t,\theta)\mathbb{E}^{\mathcal{F}_t}[\phi(t)],\ a.e.\ \theta\in[0,t], \ t\in[0,T],\ \mathbb{P}\mbox{-}a.s.
		\end{aligned}\end{equation}
		\begin{equation}\begin{aligned}\label{strategy-2}
				\bar{u}^2_t&=L^{\bar{u}^2}(t,\theta)\mathbb{E}^{\mathcal{F}_t}[\phi(t)],\ a.e.\ \theta\in[0,t],\  t\in[0,T],\ \mathbb{P}\mbox{-}a.s.,
		\end{aligned}\end{equation}
		where
		\begin{equation*}
        \begin{aligned}
				L_1^{\bar{u}^1}(t):=&-\big[\Xi^1(t)\big]^{-1}\big[0,d^1(t)^\top\Pi^1(t)c^2(t)\big],\\
				L_2^{\bar{u}^1}(t):=&\big[\Xi^1(t)\big]^{-1}\Big\{-\big[b^1(t)^\top,0\big]\Gamma(t)\\
                &\qquad\qquad\quad-\big[d^1(t)^\top,0\big]\big[I-\Gamma(t)\bar{C}(t)\big]^{-1}\Gamma(t)\big[\bar{A}^1(t)+C(t)^\top\Gamma(t)\big]\Big\},\\
		\end{aligned}\end{equation*}
        \begin{equation}\label{coefficient}
        \begin{aligned}
		L_3^{\bar{u}^1}(t,\theta):=&\big[\Xi^1(t)\big]^{-1}\bigg\{d^1(t)^\top\Pi^1(t)d^2(t)\big[\Xi^3(t)\big]^{-1}\big[D(t)^\top\Gamma(t)-G^2(t)^\top\big]\\
				&\qquad\qquad\quad -\bigg[d^1(t)^\top\Pi^1(t)c^2(t)\big[\Xi^3(t)\big]^{-1}D(t)^\top-\big[b^1(t)^\top,0\big]\\
				&\qquad\qquad\quad -\big[d^1(t)^\top,0\big]\big[I-\Gamma(t)\bar{C}(t)\big]^{-1}\Gamma(t)C(t)^\top\bigg]\int_t^{(t+\theta)\wedge T}\Lambda(t,s)ds\bigg\},\\
				L^{\bar{u}^2}(t,\theta):=&-\big[\Xi^3(t)\big]^{-1}\bigg\{D(t)^\top\Gamma(t)-G^2(t)^\top-D(t)^\top\int_t^{(t+\theta)\wedge T}\Lambda(t,s)ds\bigg\}.
		\end{aligned}\end{equation}
	\end{mythm}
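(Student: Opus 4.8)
The plan is to assemble (\ref{strategy-1})--(\ref{strategy-2}) out of the two feedback representations already in hand -- the follower's (\ref{u1-feedback}) from Theorem \ref{thm3.2} and the leader's (\ref{u2-feedback}) from Theorem \ref{thm4.2} -- together with the nonhomogeneous relation of Lemma \ref{lem4.1}, and then to record admissibility and optimality. The formula (\ref{strategy-2}) is immediate: (\ref{u2-feedback}) is already of the form $\bar u^2_t=L^{\bar u^2}(t,\theta)\,\mathbb{E}^{\mathcal{F}_t}[\phi(t)]$ with $L^{\bar u^2}(t,\theta):=-[\Xi^3(t)]^{-1}\{D(t)^\top\Gamma(t)-G^2(t)^\top-D(t)^\top\int_t^{(t+\theta)\wedge T}\Lambda(t,s)ds\}$, which is exactly the last line of (\ref{coefficient}). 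So the substantive work is (\ref{strategy-1}).

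For (\ref{strategy-1}) I would begin from (\ref{u1-feedback}) under the closed-loop identifications $u^2=\bar u^2$ and $x^{\bar u^1,u^2}=\bar x$, and express each of its four blocks through the stacked variables $\phi,\psi,\bar\psi$. The block $d^1(t)^\top\Pi^1(t)c^2(t)\bar x_t$ is the action of $[0,\ d^1(t)^\top\Pi^1(t)c^2(t)]$ on $\phi_t=(\xi_t^\top,\bar x_t^\top)^\top$, so after multiplying by $-[\Xi^1(t)]^{-1}$ it is precisely $L_1^{\bar u^1}(t)\phi_t$. For $b^1(t)^\top\eta^1(t)+d^1(t)^\top\bar\eta^1(t)$ I write $b^1(t)^\top\eta^1(t)=[b^1(t)^\top,0]\psi(t)$ and $d^1(t)^\top\bar\eta^1(t)=[d^1(t)^\top,0]\bar\psi(t)$, then substitute $\psi(t)=\Gamma(t)\phi(t)-\int_t^{(t+\theta)\wedge T}\Lambda(t,s)\mathbb{E}^{\mathcal{F}_t}[\phi(t)]ds$ from Lemma \ref{lem4.1}, the companion expression (\ref{relationship---}) for $\bar\psi(t)$, and $\Upsilon(t)=\psi(t)-\Gamma(t)\phi(t)$ from (\ref{relationship--}). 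The remaining block $d^1(t)^\top\Pi^1(t)d^2(t)\bar u^2_t$ is eliminated with (\ref{u2---}) (equivalently (\ref{strategy-2})). Throughout I use \textbf{(A3)}, under which $\bar c^2\equiv\bar d\equiv\bar q^1\equiv0$ and hence $\bar A^2\equiv\bar D\equiv G^1\equiv H\equiv0$; this kills the cross terms and is what lets everything close into exactly the three pieces $\phi_t$, $\phi(t)$, $\mathbb{E}^{\mathcal{F}_t}[\phi(t)]$. Grouping the coefficient of the path $\phi_t$, the coefficient of the value $\phi(t)$ (the terms carrying $\Gamma(t)$), and the coefficient of $\mathbb{E}^{\mathcal{F}_t}[\phi(t)]$ (the terms carrying $\int_t^{(t+\theta)\wedge T}\Lambda(t,s)ds$), and simplifying with the equations (\ref{P1-equation}), (\ref{L-equation}) and (\ref{Pi-equation}), should reproduce $L_1^{\bar u^1}$, $L_2^{\bar u^1}$, $L_3^{\bar u^1}$ of (\ref{coefficient}) verbatim.

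It then remains to check admissibility and optimality. Under \textbf{(A1)}--\textbf{(A3)} and the terminal normalisations $\bar l^i(t)=r^i(t)=0$ on $[T,2T]$ every ingredient of the feedback exists in the correct space: $\Pi^1,\Pi^2$ by Remarks \ref{rem3.1} and \ref{rem4.1}; $(\eta^1,\bar\eta^1),(\eta^2,\bar\eta^2)$ by Remarks \ref{rem3.2} and \ref{rem4.1}; $\Gamma$ and $\Lambda$ by the standing solvability hypothesis; and the state/adjoint triples $(m^1,n^1)$, $(\xi,m^2,n^2)$, $(\phi,\psi,\bar\psi)$ by the well-posedness remarks recorded after (\ref{m1-equation}), (\ref{p2-equation}) and (\ref{total-state-equation}). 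Hence $\bar u^1(\cdot)\in\mathcal{U}^1[0,T]$ and $\bar u^2(\cdot)\in\mathcal{U}^2[0,T]$. Optimality is then inherited: by Theorem \ref{thm3.2}, $\bar u^1(\cdot)$ minimises $J^1(\cdot,\bar u^2(\cdot))$ over $\mathcal{U}^1[0,T]$ and, since $\Xi^1>0$ makes $J^1(\cdot,u^2)$ strictly convex, it is the unique minimiser; by Theorem \ref{thm4.2}, $\bar u^2(\cdot)$ minimises $J^2(\bar u^1(\cdot),\cdot)$ over $\mathcal{U}^2[0,T]$, and under \textbf{(A3)} one has $\Xi^3=r^2=\Xi^2>0$, so this minimiser is also unique. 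Thus $(\bar u^1(\cdot),\bar u^2(\cdot))$ is the open-loop Stackelberg strategy of Section 2.

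The main obstacle is purely the bookkeeping: the dual ``$\ast$'' operators and the anticipation built into the ABSDEs (\ref{eta1-equation}), (\ref{eta2-equation}), (\ref{p2-equation}) and (\ref{total-state-equation}) must be carried consistently through every substitution, each conditional expectation $\mathbb{E}^{\mathcal{F}_t}[\cdot]$ kept in its correct position, and the truncation $(t+\theta)\wedge T$ in $\int_t^{(t+\theta)\wedge T}\Lambda(t,s)ds$ handled near $t=T$. The delicate point is verifying that after using \textbf{(A3)} in full the leader's control really has no residual dependence on $\phi_t$ or $\bar\psi$ -- so that $\bar u^2_t$ reduces to a function of $\mathbb{E}^{\mathcal{F}_t}[\phi(t)]$ alone -- while the follower's control retains precisely $\phi_t$, $\phi(t)$ and $\mathbb{E}^{\mathcal{F}_t}[\phi(t)]$; this forces the three identities of \textbf{(A3)} and the block structures of $A^i,\bar A^i,B,C,\bar C,D,\bar D,G^1,G^2,H$ to be used in concert, and it is where a sign or index error would be easiest to make.
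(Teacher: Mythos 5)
Your proposal follows essentially the same route as the paper's proof: observe that (\ref{strategy-2}) is just (\ref{u2-feedback}), substitute it into the follower's feedback (\ref{u1-feedback}), rewrite the blocks in terms of the stacked variables $\phi_t$, $\psi(t)$, $\bar\psi(t)$, and then eliminate $\psi$ and $\bar\psi$ via the relation (\ref{relationship}) of Lemma \ref{lem4.1} and (\ref{relationship---}). The extra remarks on admissibility and uniqueness of the minimisers go slightly beyond what the paper records, but the substantive argument is the same.
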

	\begin{proof}
		It is important to note that equation (\ref{strategy-2}) is equivalent to (\ref{u2-feedback}), so our primary focus is on proving (\ref{strategy-1}). By substituting (\ref{strategy-2}) into (\ref{u1-feedback}), it follows that
		\begin{equation}\begin{aligned}\label{eq4.31}
				\bar{u}^1_t=& -\big[\Xi^1(t)\big]^{-1}\Big\{d^1(t)^\top\Pi^1(t)c^2(t)\bar{x}_t+b^1(t)^\top\eta^1(t)\\
				&\qquad\qquad\qquad +d^1(t)^\top\bar{\eta}^1(t)+d^1(t)^\top\Pi^1(t)d^2(t)\bar{u}^2_t\Big\}\\
				=& -\big[\Xi^1(t)\big]^{-1}\bigg\{\big[0,d^1(t)^\top\Pi^1(t)c^2(t)\big]\phi_t+\big[b^1(t)^\top,0\big]\psi(t)\\
				&\qquad\qquad\qquad +\big[d^1(t)^\top,0\big]\bar{\psi}(t)-d^1(t)^\top\Pi^1(t)d^2(t)\big[\Xi^3(t)\big]^{-1}\bigg[D(t)^\top\Gamma(t)\\
				&\qquad\qquad\qquad -G^2(t)^\top-D(t)^\top\int_t^{(t+\theta)\wedge T}\Lambda(t,s)ds\bigg]\mathbb{E}^{\mathcal{F}_t}[\phi(t)]\bigg\}.\\
		\end{aligned}\end{equation}
		Furthermore, by plugging (\ref{relationship}) and (\ref{relationship---}) into (\ref{eq4.31}), we can deduce (\ref{strategy-1}), thus completing the proof.	
	\end{proof}
	
	\begin{Remark}\label{rem4.4}
		From Theorem \ref{thm4.3}, to derive the state feedback of $\bar{u}^1(\cdot)$, $\bar{u}^2(\cdot)$, we impose the assumptions \textbf{(A1)-(A3)} to the coefficients of (\ref{system equation}) and (\ref{cost}). In the one-dimensional case, we can obtain the explicit solutions to the Riccati equation (\ref{P1-equation}) and (\ref{P2-equation}) as follows:
		\begin{equation*}\begin{aligned}
				&\Pi^1(t)=g^1e^{\int_t^T(2a^1(s)+[c^1(s)]^2)}ds+\int_t^Te^{\int_t^s(2a^1(r)+[c^1(r)]^2)dr}\big[l^1(s)+[\bar{l}^1(s)]^\ast\big]ds,\\
				&\Pi^2(t)=g^2e^{\int_t^T(2a^1(s)+[c^1(s)]^2)ds}+\int_t^Te^{\int_t^s(2a^1(r)+[c^1(r)]^2)dr}\big[l^2(s)+[\bar{l}^2(s)]^\ast\big]ds.
		\end{aligned}\end{equation*}
		Therefore \textbf{(A1)-(A3)} can be simplified in the following two cases:
		\begin{enumerate}
			\item $\Pi^1(\cdot)>0,\quad \Pi^2(\cdot)\neq0.$\\
			\begin{equation*}\begin{aligned}
					&b^1(t)+c^1(t)d^1(t)=0,\quad a^2(t)+c^1(t)c^2(t)=0,\quad b^2(t)+c^1(t)d^2(t)=0,\quad a.e.\ t\in[0,T],\\
					&d^1(t)\neq0,\quad r^1(t)\neq0,\quad  \forall\ t\in[0,T].
			\end{aligned}\end{equation*}
			\item $\Pi^1(\cdot)>0,\quad \Pi^2(\cdot)=0.$\\
			\begin{equation*}\begin{aligned}
					&b^1(t)+c^1(t)d^1(t)=0,\quad a^2(t)+c^1(t)c^2(t)=0,\quad a.e.\ t\in[0,T].\\
					&d^1(t)\neq0,\quad r^1(t)=0,\quad  r^2(t)>0,\quad \forall\ t\in[0,T].
			\end{aligned}\end{equation*}
		\end{enumerate}
		
	\end{Remark}
	
\section{An applications to dynamic cooperative advertising problem}

	This section verifies the validity of the theoretical results in preceding sections. To this end, the dynamic cooperative advertising problem of a manufacturer and a retailer is transformed into a Stackelberg differential game problem with elephant memory.
	
	In this section, the attention is drawn to a modified model for a dynamic cooperative advertising problem in the marketing channel (see \cite{JSZ00}). The model under consideration comprises two participants selling some good: the manufacturer (label $m$) and the retailer (label $r$), who are regarded as the leader and the follower, respectively. The manufacturer employs the rate of advertising efforts in national media to enhance their brand image, while the retailer utilizes local promotional activities to stimulate demand among consumers. As demonstrated in the extensive existing body of literature, numerous studies have been undertaken on the impact of advertising and promotion on branding, as detailed in \cite{JTZ01, JTZ03, JZ99} and the citations therein. In particular, \cite{ZS22-1} studied the stochastic case containing asymmetric information. In this work, we consider stochastic case with elephant memory, which, to the best of our knowledge, has not been studied in the literature before.
	
	A channel is considered consisting of a manufacturer and a retailer. $u^1(\cdot)$ and $u^2(\cdot)$ indicate the rate of advertising effort of the retailer and the manufacturer, respectively. It has been determined that the manufacturer does not allocate financial resources to the retailer for any form of advertising. At the same time, the assumption is made that the profit per unit of the product is constant, denoted respectively as $\pi_m$, $\pi_r$.
	With the help of the cost functions $(\mu_m, \mu_r)$, which are positive constants, we can obtain the corresponding advertising costs $\frac{\mu_m}{2}\big[u^2(t)\big]^2$ and $\frac{\mu_r}{2}\big[u^1(t)\big]^2$ for the manufacturer and the retailer, respectively.
	
	The following discussion will introduce the brand image function, which can provide a comprehensive representation of the effectiveness of advertising activities initiated by the manufacturer, as well as the promotional activities of the retailer. This image function evolves according to the dynamic:
	\begin{equation}\left\{\begin{aligned}\label{image-function}
			\dot{p}(t)=&\ \lambda_me^{-\frac{\tau t}{2}}u^2(t)-\lambda_re^{-\frac{\tau t}{2}}u^1(t)-\delta p(t),\ t \in [0,T],\\
			      p(0)=&\ p_0,
	\end{aligned}\right.\end{equation}
	where the positive parameters $\lambda_m$ and $\lambda_r$ denote the effective proportion of the manufacturer and the retailer, respectively. $\delta \ge 0$ represents the exogenously determined rate of depreciation of commodity credits. It is evident that advertising exerts a positive influence on brand image, while promotion demonstrates an inverse effect from (\ref{image-function}).
	It is further hypothesized that the presence of both advertisements and promotions has a stimulatory effect on consumer demand, a phenomenon which is supported by the extant evidence. The relationship between the demand for goods $q(\cdot)$ and the function of image $p(\cdot)$ is as follows:
	\begin{equation*}
		q(t)=\sqrt{\sigma_m+\sigma_r}\big[p(t)\big]^2,
	\end{equation*}
	where non-negative parameters $\sigma_m$, $\sigma_r$ indicate the effectiveness of the manufacturer and the retailer advertising, respectively. Thus, the corresponding objective functionals of the manufacturer and the retailer as following:
	\begin{equation}\left\{\begin{aligned}\label{obj- func}
			J_m=&\int_0^T\bigg[-\pi_m\sqrt{\sigma_m+\sigma_r}e^{\tau t}\big[p(t)\big]^2
			+\frac{\mu_m}{2}[u^2_t]^2\bigg]dt+m^2e^{\tau T}\big[p(T)\big]^2,\\
			J_r=&\int_0^T\bigg[-\pi_r\sqrt{\sigma_m+\sigma_r}e^{\tau t}\big[p(t)\big]^2+\frac{\mu_r}{2}[d^1u^1_t]^2\bigg]dt+m^1e^{\tau T}\big[p(T)\big]^2,
		\end{aligned}\right.\end{equation}
	where positive cost coefficients $m^1, m^2$ represent the cost of maintaining a pretty brand in the terminal time, and $m^1\ge\pi_r\sqrt{\sigma_m+\sigma_r}e^{[(c^1)^2-\tau-2\delta]T}$. Let $x(t)=e^{\frac{1}{2}\tau t}p(t)$, then the above problem can be converted to a standard linear-quadratic differential game. Hence (\ref{image-function})-(\ref{obj- func}) become
	\begin{equation}\left\{\begin{aligned}\label{modify-deter}
			\dot{x}(t)=&\Big(-\frac{1}{2}\tau-\delta\Big) x(t)-\lambda_r u^1(t)+\lambda_m u^2(t),\\
			J_m=&\int_0^T\bigg[-\pi_m\sqrt{\sigma_m+\sigma_r}\big[x(t)\big]^2
			+\frac{\mu_m}{2}[u^2_t]^2\bigg]dt+m^2 \big[x(T)\big]^2,\\
			J_r=&\int_0^T\bigg[-\pi_r\sqrt{\sigma_m+\sigma_r}\big[x(t)\big]^2+\frac{\mu_r}{2}[d^1u^1_t]^2\bigg]dt+m^1\big[x(T)\big]^2.
		\end{aligned}\right.\end{equation}
	
	However, within a real market economy, it is possible that businessmen will determine the timing of advertisements based on the relationship between prior advertisements and sales of products. Consequently, it is hypothesized that both the rate of advertisements $u^1(\cdot)$ and $u^2(\cdot)$ placed by the manufacturer and the retailer depend on the entire past history, while the elephant memory of the state exerts an influence on the cost functional of each constituent of the channel. The factors influencing the cost functionals of the manufacturer and the retailer are not uniquely determined, so the model is extended to the stochastic case with elephant memory:
	\begin{equation}\left\{\begin{aligned}\label{modify-stochas}
			dx(t)=&\bigg[\Big(-\frac{1}{2}\tau-\delta\Big) x(t)      -\lambda_re^{-\tau_1}u^1_t+\lambda_me^{-\tau_2}u^2_t\bigg]dt\\
			&+\Big[c^1x(t)+d^1u^1_t+d^2u^2_t\Big]dW(t),\ t\in[0,T],\\
			J_m=&\ \mathbb{E}\bigg\{\int_0^T\bigg[-\pi_m\sqrt{\sigma_m+\sigma_r}\big[x(t)\big]^2-\bar{l}^2x^2_t+\frac{\mu_m}{2}e^{-\tau_2}[u^2_t]^2\bigg]dt+m^2\big[x(T)\big]^2\bigg\},\\
			J_r=&\ \mathbb{E}\bigg\{\int_0^T\bigg[-\pi_r\sqrt{\sigma_m+\sigma_r}\big[x(t)\big]^2-\bar{l}^1x^2_t+\frac{\mu_r}{2}e^{-\tau_1}[d^1u^1_t]^2\bigg]dt+m^1\big[x(T)\big]^2\bigg\},
		\end{aligned}\right.\end{equation}
	where positive parameters $\tau_1, \tau_2$ are given constants, denoted as profit margin per unit of advertising. $c^1,d^1,d^2$ are all deterministic constants satisfying
	\begin{equation*}
		d^1\ne0, \quad (c^1)^2-\tau-2\delta\ge0, \quad   b^1+c^1d^1=0,\quad b^2+c^1d^2\neq0.
	\end{equation*}
	
	Now we apply the theoretical results in the above two sections to seek the optimal open-loop Stackelberg strategy. The coefficients take the following values:
	\begin{equation*}\begin{aligned}
			&a^1(t)=-\frac{1}{2}\tau-\delta,\quad b^1(t)=-\lambda_re^{-\tau_1},\quad b^2(t)=\lambda_me^{-\tau_2},\quad a^2(t)=c^2(t)=0,\\
			&l^1(t)=-\pi_r\sqrt{\sigma_m+\sigma_r}, \quad \bar{l}^1(t)=-\bar{l}^1, \quad  r^1(t)=\frac{\mu_r}{2}e^{-\tau_1}[d^1]^2,\quad g^1=m^1,\\
			&l^2(t)=-\pi_m\sqrt{\sigma_m+\sigma_r},\quad \bar{l}^2(t)=-\bar{l}^2, \quad  r^2(t)=\frac{\mu_m}{2}e^{-\tau_2},\quad g^2=m^2.
	\end{aligned}\end{equation*}
	
	Apparently the coefficients satisfies \textbf{(A1)-(A3)}, now
	\begin{equation*}
		\Xi^1(t)=\bar{\Pi}(t)\big[d^1\big]^2,\quad \Xi^2(t)=\Xi^3(t)=\frac{\mu_m}{2}e^{-\tau_2},
	\end{equation*}
	where $\bar{\Pi}(\cdot)=\Pi^1(\cdot)+\frac{\mu_r}{2}e^{-\tau_1}$, and $\Pi^1(\cdot)$ is the solution the following linear ODE:
	\begin{equation}\left\{\begin{aligned}\label{ex2-P1}
			\dot{\Pi}^1(t)=&\Big[\tau+2\delta-(c^1)^2\Big]\Pi^1(t)+\pi_r\sqrt{\sigma_m+\sigma_r},\ t\in[0,T],\\
			\Pi^1(T)=&\ m^1.
	\end{aligned}\right.\end{equation}
	It is easy to verify that $\Pi^1(t)=-\pi_r\sqrt{\sigma_m+\sigma_r}\int_{t}^{T}e^{[(c^1)^2-\tau-2\delta](s-t)}ds+m^1>0$, hence $\Xi^1>0$ and $\big[\Xi^1(t)\big]^{-1}=\big[\bar{\Pi}(t)\big]^{-1}[d^1]^{-2}$. Now (\ref{P2-equation}) reads
	\begin{equation}\left\{\begin{aligned}\label{ex2-P2}
			\dot{\Pi}^2(t)=&\Big[\tau+2\delta-(c^1)^2\Big]\Pi^2(t)+\pi_m\sqrt{\sigma_m+\sigma_r},\ t\in[0,T],\\
			\Pi^2(T)=&\ m^2,
		\end{aligned}\right.\end{equation}
	and it follows that $\Pi^2(t)=-\pi_m\sqrt{\sigma_m+\sigma_r}\int_t^Te^{[(c^1)^2-\tau-2\delta](s-t)}ds+m^2$. In this case,
	\begin{eqnarray*}\begin{aligned}
			&A^2 \equiv \bar{A}^2\equiv\bar{D}\equiv G^1\equiv H \equiv0,\quad
			A^1(t)=-\Big(\frac{1}{2}\tau+\delta\Big)\begin{bmatrix}
				1 & 0\\
				0 & 1
			\end{bmatrix},\\
            &B(t)=\big[c^1\big]^2\bar{\Pi}^{-1}(t)\begin{bmatrix}
				\bar{\Pi}(t)\Pi^2(t) & -1\\
				-1 & 0
			\end{bmatrix},\quad
            C(t)=c^1\bar{\Pi}^{-1}(t)\begin{bmatrix}
				-\bar{\Pi}^{-1}(t)\Pi^2(t) & 1\\
				1 & 0
			\end{bmatrix},\\
			&\bar{A}^1(t)= \begin{bmatrix}
				c^1 & 0\\
				0 & c^1
			\end{bmatrix},\quad
            \bar{C}(t)=\bar{\Pi}^{-1}(t)\begin{bmatrix}
				\bar{\Pi}^{-1}(t)\Pi^2(t) & -1\\
				-1 & 0
			\end{bmatrix},\\
			&D(t)= \begin{bmatrix}
				0 \\
				b^2+c^1d^2\Pi^1(t)\bar{\Pi}^{-1}(t)
			\end{bmatrix},\quad
            G^2(t)= \begin{bmatrix}
				-\Pi^1(t)(b^2+c^1d^2) \\
				0
			\end{bmatrix}.
	\end{aligned}\end{eqnarray*}
	Denote
	\begin{equation*}\begin{aligned}
			\Gamma(t):= \begin{bmatrix}
				\Gamma^1(t) & \Gamma^2(t)\\
				\Gamma^2(t) & \Gamma^3(t)
			\end{bmatrix},\quad
            \Lambda(t,\theta):= \begin{bmatrix}
				\Lambda^1(t,\theta) & \Lambda^2(t,\theta)\\
				\Lambda^2(t,\theta) & \Lambda^3(t,\theta)
			\end{bmatrix}.
	\end{aligned}\end{equation*}
	With some computations, we can get for $t\in[0,T]$,
	\begin{equation*}\begin{aligned}
			&[I-\Gamma(t)\bar{C}(t)]^{-1}\\
			=& \frac{1}{\big[\Gamma^2(t)+\Pi^1(t)\big]^2-\Gamma^1(t)\Gamma^3(t)-\Gamma^2(t)\Pi^2(t)}\\
            &\times\begin{bmatrix}
				\Gamma^2(t)\bar{\Pi}(t)+\big[\bar{\Pi}(t)\big]^2 & -\Gamma^1(t)\bar{\Pi}(t)\\
				\Gamma^2(t)\Pi^2(t)-\Gamma^3(t)\bar{\Pi}(t) & \big[\bar{\Pi}(t)\big]^2+\Gamma^2(t)\bar{\Pi}(t)-\Gamma^1(t)\Pi^2(t)
			\end{bmatrix}.
	\end{aligned}\end{equation*}
	It follows that for $t\in[0,T]$ (omitting $t$)
	\begin{equation*}\begin{aligned}
			\Omega^1(t)=& \big[c^1\big]^2\bar{\Pi}^{-1}\begin{bmatrix}
				\Gamma^1\bar{\Pi}\Pi^2-\Gamma^2 & \Gamma^2\bar{\Pi}\Pi^2-\Gamma^3\\
				-\Gamma^1 & -\Gamma^2
			\end{bmatrix}+\frac{\big[c^1\big]^2\bar{\Pi}^{-1}}{\big[\Gamma^2+\Pi^1\big]^2-\Gamma^1\Gamma^3-\Gamma^2\Pi^2}\\
            &\times\begin{bmatrix}
				-\Gamma^3-\Pi^2 & \Gamma^2+\bar{\Pi}\\
				\Gamma^2+\bar{\Pi} & -\Gamma^1
			\end{bmatrix}\begin{bmatrix}
				\Gamma^1 & \Gamma^2\\
				\Gamma^2 & \Gamma^3
			\end{bmatrix}\begin{bmatrix}
				\Gamma^2+\bar{\Pi}-\Gamma^1\bar{\Pi}^{-1}\Pi^2 & \Gamma^3-\Gamma^2\bar{\Pi}^{-1}\Pi^2\\
				\Gamma^1 & \Gamma^2+\bar{\Pi}
			\end{bmatrix}\\
       &+\frac{2}{\mu_m}e^{\tau_2}\big[b^2+c^1d^2\Pi^1\bar{\Pi}^{-1}\big]^2\begin{bmatrix}
				0 & 0\\
				-\frac{\Pi^1[b^2+c^1d^1]}{b^2+c^1d^2\Pi^1\bar{\Pi}^{-1}}-\Gamma^2 & -\Gamma^3
			\end{bmatrix},
	\end{aligned}\end{equation*}	
	\begin{equation*}\begin{aligned}
			\Omega^2(t)=&\big[c^1\big]^2\bar{\Pi}^{-1}\begin{bmatrix}
				-\bar{\Pi}\Pi^2 & 1\\
				1 & 0
			\end{bmatrix}-\frac{\big[c^1\big]^2\bar{\Pi}^{-1}}{\big[\Gamma^2+\Pi^1\big]^2-\Gamma^1\Gamma^3-\Gamma^2\Pi^2} \begin{bmatrix}
				-\Gamma^3-\Pi^2 & \Gamma^2+\bar{\Pi}\\
				\Gamma^2+\bar{\Pi} & -\Gamma^1\\
			\end{bmatrix}\\
			&\times
			\begin{bmatrix}
				\Gamma^2-\Gamma^1\bar{\Pi}\Pi^2 & \Gamma^1\\
				\Gamma^3-\Gamma^2\bar{\Pi}\Pi^2 & \Gamma^2
			\end{bmatrix}+\frac{2}{\mu_m}e^{\tau_2}\big[b^2+c^1d^2\Pi^1\bar{\Pi}^{-1}\big]\begin{bmatrix}
				0 & 0\\
				0 & 1
			\end{bmatrix},
	\end{aligned}\end{equation*}
	\begin{equation*}\begin{aligned}
			\Omega^3(t)=&\big[c^1\big]^2\bar{\Pi}^{-1}\begin{bmatrix}
				2\Gamma^1\Gamma^2-\big[\Gamma^1\big]^2\bar{\Pi}\Pi^2 & \big[\Gamma^2 \big]^2+\Gamma^1\Gamma^3-\Gamma^1\Gamma^2\bar{\Pi}\Pi^2\\
				\big[\Gamma^2\big]^2+\Gamma^1\Gamma^3-\Gamma^1\Gamma^2\bar{\Pi}\Pi^2 & 2\Gamma^2\Gamma^3-\big[\Gamma^2\big]^2\bar{\Pi}\Pi^2
			\end{bmatrix}\\
			&+\frac{2}{\mu_m}e^{\tau_2}\big[b^2+c^1d^2\Pi^1\bar{\Pi}^{-1}\big]^2\begin{bmatrix}
				\Big(\Gamma^2+\frac{\Pi^1[b^2+c^1d^2]}{b^2+c^1d^2\Pi^1\bar{\Pi}^{-1}}\Big)^2 & \Gamma^2\Gamma^3+\frac{\Gamma^3\Pi^1[b^2+c^1d^2]}{b^2+c^1d^2\Pi^1\bar{\Pi}^{-1}}\\
				\Gamma^2\Gamma^3+\frac{\Gamma^3\Pi^1[b^2+c^1d^2]}{b^2+c^1d^2\Pi^1\bar{\Pi}^{-1}} & \big[\Gamma^3\big]^2
			\end{bmatrix}\\
			&-\frac{\big[c^1\big]^2\bar{\Pi}^{-1}}{\big[\Gamma^2+\Pi^1\big]^2-\Gamma^1\Gamma^3-\Gamma^2\Pi^2}\begin{bmatrix}
				\Gamma^2+\bar{\Pi}-\Gamma^1\bar{\Pi}^{-1}\Pi^2 & \Gamma^1\\
				\Gamma^3-\Gamma^2\bar{\Pi}^{-1}\Pi^2 & \Gamma^2+\bar{\Pi}
			\end{bmatrix}\\
			&\times\begin{bmatrix}
				\Gamma^2\bar{\Pi}+\big[\bar{\Pi}\big]^2 & -\Gamma^1\bar{\Pi}\\
				\Gamma^2\Pi^2-\Gamma^3\bar{\Pi} & \big[\bar{\Pi}\big]^2+\Gamma^2\bar{\Pi}-\Gamma^1\Pi^2
			\end{bmatrix}\begin{bmatrix}
				\Gamma^1 & \Gamma^2\\
				\Gamma^2 & \Gamma^3
			\end{bmatrix}\\
            &\times\begin{bmatrix}
				\Gamma^2+\bar{\Pi}-\Gamma^1\bar{\Pi}^{-1}\Pi^2 & \Gamma^3-\Gamma^2\bar{\Pi}^{-1}\Pi^2\\
				\Gamma^1 & \Gamma^2+\bar{\Pi}
			\end{bmatrix}.
	\end{aligned}\end{equation*}
	Then (\ref{L-equation}) becomes:
	\begin{equation}\left\{\begin{aligned}\label{ex2-L}
			\dot{\Gamma}(t)&=(\tau+2\delta)\Gamma(t)+\Lambda(t,t+\theta)I_{[0,T]}(t+\theta),\ t\in[0,T],\ \theta \in[0,t],\\
			\Gamma(T)&=0,
		\end{aligned}\right.\end{equation}
	and $\Lambda(t,t+\theta)$ satisfies
	\begin{equation}\left\{\begin{aligned}\label{ex2-Pi}
			\frac{\partial\Lambda(t,s)}{\partial t}=&\ (\tau+2\delta)\Lambda(t,s),\\
			\Lambda(t,t)=&\left(\int_t^{(t+\theta)\wedge T}\Lambda(t,s)ds\right)\Omega^1(t)+\Omega^1(t)^\top\int_t^{(t+\theta)\wedge T}\Lambda(t,s)ds\\
			&+\left(\int_t^{(t+\theta)\wedge T}\Lambda(t,s)ds\right)\Omega^2(t)\int_t^{(t+\theta)\wedge T}\Lambda(t,s)ds+\Omega^3(t).
		\end{aligned}\right.\end{equation}
	The difficulty of solving the matrix-valued equation (\ref{ex2-L}) is compounded by the difficulty of determining the uniqueness of the solution to equation (\ref{ex2-Pi}), and by the fact that $\Lambda^1$, $\Lambda^2$ and $\Lambda^3$ are coupled in the matrix-valued equation (\ref{ex2-L}), which makes it even more difficult to give an analytical expression for (\ref{ex2-L}).
	
	Finally, the optimal open-loop Stackelberg strategy is as follows:
	\begin{equation*}
			\bar{u}^1_t=L(t)\phi(t)+L^1(t,\theta)\mathbb{E}^{\mathcal{F}_t}[\phi(t)],\ a.e.\ t\in[0,T],\ \mathbb{P}\mbox{-}a.s.
	\end{equation*}
	\begin{equation*}
			\bar{u}^2_t=L^2(t,\theta)\mathbb{E}^{\mathcal{F}_t}[\phi(t)],\ a.e.\ t\in[0,T],\ \mathbb{P}\mbox{-}a.s.,
	\end{equation*}
	where
	\begin{equation*}\begin{aligned}
      L(t):=&\big[\Xi^1(t)\big]^{-1}\bigg\{-b^1[\Gamma^1(t), \Gamma^2(t)]- \frac{c^1\bar{\Pi}^{-1}(t)}{\big[\Gamma^2(t)+\Pi^1(t)\big]^2-\Gamma^1(t)\Gamma^3(t)-\Gamma^2(t)\Pi^2(t)}                           \bigg\}[d^1,0]\\
      &\times \begin{bmatrix}
				\Gamma^2(t)\bar{\Pi}(t)+\big[\bar{\Pi}(t)\big]^2 & -\Gamma^1(t)\bar{\Pi}(t)\\
				\Gamma^2(t)\Pi^2(t)-\Gamma^3(t)\bar{\Pi}(t) & \big[\bar{\Pi}(t)\big]^2+\Gamma^2(t)\bar{\Pi}(t)-\Gamma^1(t)\Pi^2(t)
			\end{bmatrix}
				\begin{bmatrix}
					\Gamma^1(t) & \Gamma^2(t)\\
					\Gamma^2(t) & \Gamma^3(t)
				\end{bmatrix}\\
				&\times\begin{bmatrix}
					\Gamma^2(t)+\bar{\Pi}(t)-\Gamma^1(t)\bar{\Pi}^{-1}(t)\Pi^2(t) & \Gamma^3(t)-\Gamma^2(t)\bar{\Pi}^{-1}(t)\Pi^2(t)\\
					\Gamma^1(t) & \Gamma^2(t)+d^1
				\end{bmatrix},\\
		L^1(t,\theta):=&\big[\Xi^1(t)\big]^{-1}\bigg\{\frac{2}{\mu_m}e^{\tau_2}d^1d^2\Pi^1(t)\big[b^2+c^1d^2\Pi^1(t)\bar{\Pi}^{-1}(t)\big]\\
        &\times \bigg[\Gamma^2(t)+\frac{\Pi^1(t)[b^2+c^1d^2]}{b^2+c^1d^2\Pi^1(t)\bar{\Pi}^{-1}(t)}, \Gamma^3(t)\bigg]\\
		&\times \bigg\{[-b,0]- \frac{[d^1,0]\bar{\Pi}^{-1}(t)}{\big[\Gamma^2(t)+\Pi^1(t)\big]^2-\Gamma^1(t)\Gamma^3(t)-\Gamma^2(t)\Pi^2(t)}\\
		&\times \begin{bmatrix}
			\Gamma^2(t)\bar{\Pi}(t)+\big[\bar{\Pi}(t)\big]^2 & -\Gamma^1(t)\bar{\Pi}(t)\\
			\Gamma^2(t)\Pi^2(t)-\Gamma^3(t)\bar{\Pi}(t) & \big[\bar{\Pi}(t)\big]^2+\Gamma^2(t)\bar{\Pi}(t)-\Gamma^1(t)\Pi^2(t)
		\end{bmatrix}\\
		&\times \begin{bmatrix}
			\Gamma^1(t)\bar{\Pi}^{-1}(t)\Pi^2(t)-\Gamma^2(t) & -\Gamma^1(t)\\
			\Gamma^2(t)\bar{\Pi}^{-1}(t)\Pi^2(t)-\Gamma^3(t)& -\Gamma^2(t)
		\end{bmatrix}\bigg\} \times \int_t^{(t+\theta)\wedge T}\Lambda(t,s)ds\bigg\},\\
			L^2(t,\theta):=&-\frac{2}{\mu_m}e^{\tau_2}\big[b^2+c^1d^2\Pi^1(t)\bar{\Pi}^{-1}(t)\big]\begin{bmatrix}
				\Gamma^2(t)+\frac{\Pi^1(t)[b^2+c^1d^2]}{b^2+c^1d^2\Pi^1(t)\bar{\Pi}^{-1}(t)}-\int_t^{(t+\theta)\wedge T}\Lambda^2(t,s)ds\\
				\Gamma^3(t)-\int_t^{(t+\theta)\wedge T}\Lambda^3(t,s)ds
			\end{bmatrix}^\top.
	\end{aligned}\end{equation*}
	
	Let $c^1=1$, $m^1=1000$, $m^2=2000$, $\pi_m=0.002$, $\pi_r=0.1$, $\sigma_m=0.3$, $\sigma_r=0.7$, $\delta=0.3$, $\tau=0.2$, $T=10$, we show the following figures of (\ref{ex2-P1}) and (\ref{ex2-P2}), respectively:
	\begin{figure}[H]
		\centering
		\includegraphics[height=7cm,width=12cm]{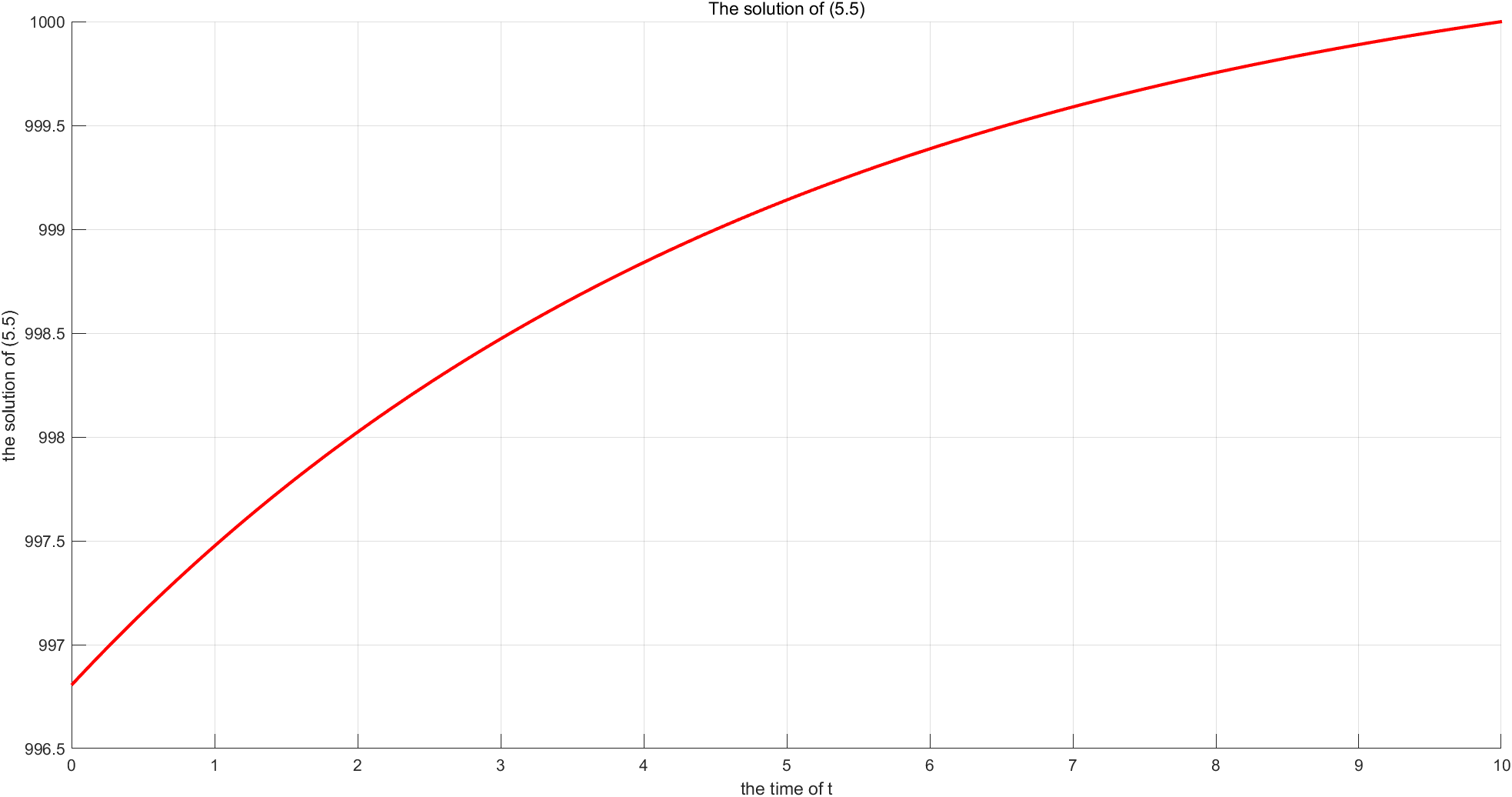}
		\caption{The solution to linear ODE (\ref{ex2-P1})}
	\end{figure}
	\begin{figure}[H]
		\centering
		\includegraphics[height=7cm,width=12cm]{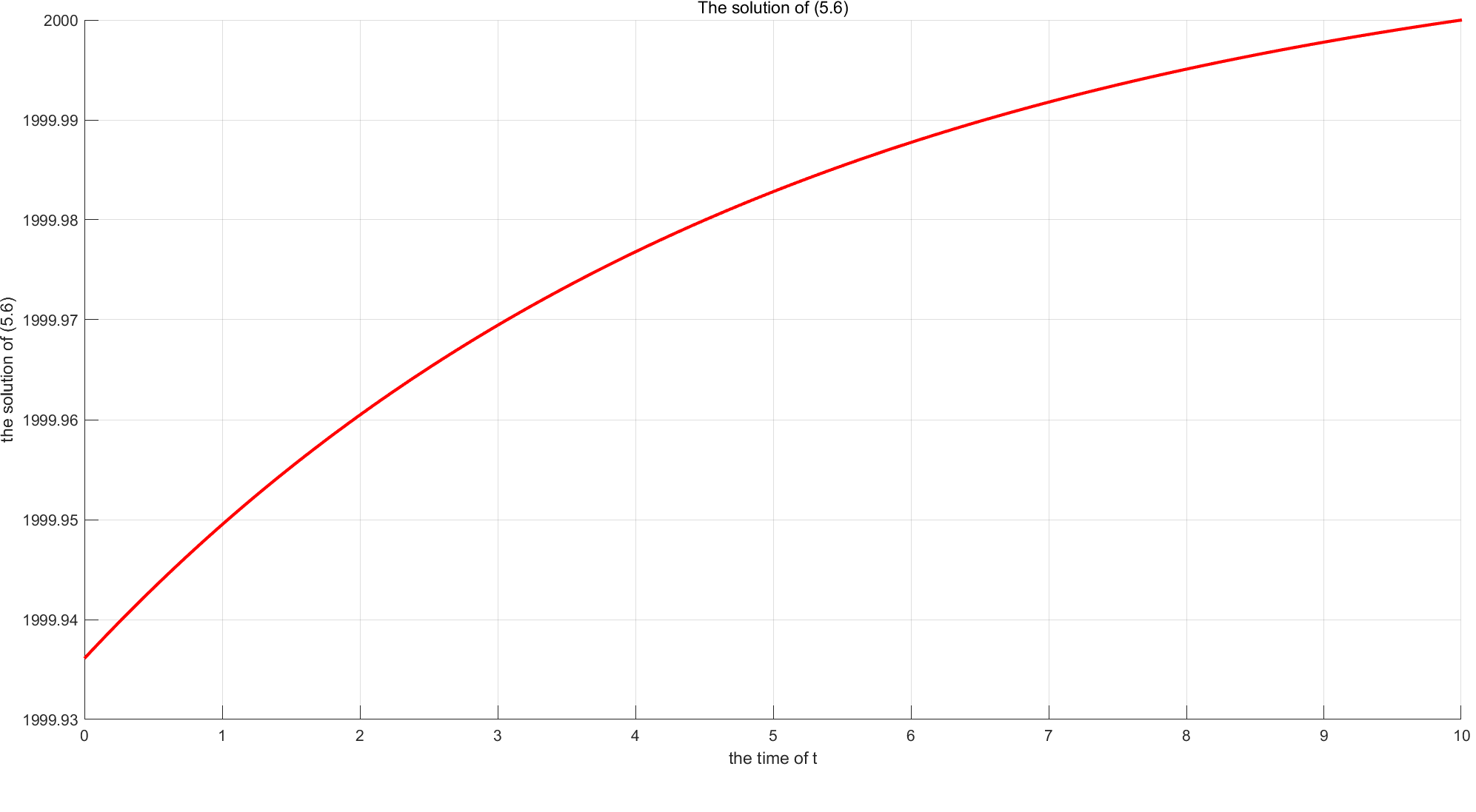}
		\caption{The solution to linear ODE (\ref{ex2-P2})}
	\end{figure}
	
\section{Concluding remarks}
	
	The work in this paper resembles that in \cite{MS22}. Our model is general in that the state equation for both the leader and the follower includes the state of elephant memory and the control of elephant memory, which are part of the diffusion term. However, the cost functionals (\ref{cost}) don't contain $\big[u^i(t)\big]^2, i=1, 2$, due to the requirement of linearity imposed by the operator in Lemma (\ref{dual operator}). To start, we address the optimization problem for the follower, which is a stochastic linear-quadratic optimal control problem with elephant memory for any choice selected by the leader. We present the necessary and sufficient conditions for solving the follower's problem by introducing a Riccati equation (\ref{P1-equation}) (see Theorem \ref{thm3.2}).
	
	Subsequently, we examine the optimization problem for the leader, which is a stochastic linear-quadratic optimal control problem characterized by a state equation formed by an SDE with elephant memory and an ABSDE. In order to solve for the optimal control of the follower and the leader, it is necessary to impose two additional unnatural conditions, \textbf{(A1)} and \textbf{(A2)}, which are more complex than those presented in \cite{MS22}. This is due to the fact that, at this stage, it is possible to access only the Fr\'echet derivative of the continuous linear operator and the corresponding dual theory as we know.
	
	The forward and backward variables obtained during the optimization of both the follower and the leader are stacked together. Alongside this, a special matrix-valued equation (\ref{L-equation}) is introduced. Ultimately, we derive the necessary conditions for the solvability of the leader's problem (see Theorem \ref{thm4.2}). In summary, the open-loop Stackelberg strategy is shown in Theorem \ref{thm4.3}. At the end of this paper, theoretical results is illustrated by means of an example concerning a dynamic advertising problem with elephant memory.
	
	As the Fr\'echet derivative and dual theory of higher-order operator are established, it may become possible to deal with more general cases, in line with the weakening or removal of the assumptions \textbf{(A1)-(A3)}. It is essential to propose and carefully investigate some reasonable relations among the adjoint variables and the state variable. Furthermore, it is a fascinating exercise to provide analytical expressions for the Riccati equations and to investigate the solvability of the related matrix-valued equations. Nevertheless, these subjects are intricate and exacting.  We will consider them in the near future.

\end{document}